\newcommand{\be}{\begin{equation}}
\newcommand{\ee}{\end{equation}}
\newcommand{\ba}{\begin{align}}
\newcommand{\ea}{\end{align}}
\DeclareMathOperator{\Res}{Res}
\newtheorem{theorem}{Theorem}[section]
\newtheorem{lemma}{Lemma}[section]
\title{Minimal  group determinants and the  Lind-Lehmer problem for dihedral groups}
\author[T. Boerkoel]{Ton Boerkoel}
\address{Department of Mathematics\\
DigiPen Institute of Technology\\
Redmond, WA 98052, USA}
\email{aboerkoel@digipen.edu}
\author[C. Pinner]{Christopher Pinner}
\address{ Department of Mathematics\\
         Kansas State University\\
         Manhattan, KS 66506, USA}
\email{pinner@math.ksu.edu}
\thanks{The second  author thanks the University of Edinburgh for the invitation  to visit,  the Edinburgh Math Society for its financial  support, and Chris Smyth for some  very useful discussions.}
\keywords{Lind-Lehmer constant, Mahler measure, group determinant, dihedral group}
\subjclass[2010]{Primary: 11R06; Secondary: 11B83, 11C08,  11G50, 11R09, 11T22, 43A40}
\date{\today}
\begin{document}


\begin{abstract}
We find the minimal non-trivial integer variable group determinant for any  dihedral group  of order less than $3.79\times 10^{47}$.
We think of this as the Lind-Lehmer problem for the dihedral group. We give a complete description of the determinants for some dihedral groups including  $D_{2p}$ and $D_{4p}$.

\end{abstract}

\maketitle

\section{Introduction}\label{secIntroduction}

For a finite group $G=\{g_1,\ldots ,g_n\}$ we assign a variable $x_g$ for each $g$ in $G$ and define the {\it group determinant} 
to be the $n\times n$ determinant
$$ \mathscr{D}_G(x_{g_1},\ldots ,x_{g_n})= \det \left( x_{g_i g_j^{-1}}\right). $$
Plainly this is a homogeneous polynomial of degree $n$ in the $n$ variables $x_g$. 
We are interested here in the smallest non-trivial value this can take when the $x_g$ are  all  integers:
\be \label{deflambda} \lambda(G) =\min\{ |\mathscr{D}_G( x_{g_1},\ldots ,x_{g_n})|\geq 2\; : \; (x_{g_1},\ldots ,x_{g_n})\in \mathbb Z^n\}. \ee
In the  cyclic group  case $\mathscr{D}_G( x_{g_1},\ldots ,x_{g_n})$ reduces to a circulant determinant. An old  problem of Olga Taussky-Todd  is to determine
what values these can take for integer variables, see for example  \cite{Newman1,Newman2,Laquer,Norbert2}. For simplicity we shall use $\mathbb Z_n$,  rather than  $\mathbb Z/n\mathbb Z,$ to denote the 
integers mod $n$, the basic cyclic group of order $n$. Studying the related Lind-Lehmer problem for cyclic groups, the value of $\lambda(\mathbb Z_n)$  was obtained by Kaiblinger  \cite{Norbert} for $n$  not a multiple of 420, with \cite{Pigno1} extending this to $n$  not a multiple of $2^3\cdot 3\cdot 5\cdot 7\cdot 11\cdot 13\cdot 17\cdot 19\cdot 23$. Here we  do the same for $D_{2n}$, the  dihedral group of order $2n$, when $n$ is not a multiple of $\displaystyle 2^2 \cdot 3^2 \cdot \prod_{5\leq p\leq 113}p > 1.89 \times 10^{47}$.

Dedekind  showed  that for  abelian  $G$ the group determinant can be factored over $\mathbb C[x_{g_1},\ldots ,x_{g_n}]$ into linear factors using the group of characters $\hat{G}$ on $G$
\be \label{abelianfactor} \mathscr{D}_G(  x_{g_1},\ldots ,x_{g_n})=\prod _{\chi \in \hat{G}} \left( \chi(g_1) x_{g_1}+\cdots + \chi(g_n)x_{g_n}\right), \ee
see for example Lang \cite[\S 3.6]{Lang}.
Dedekind  observed that for non-abelian groups one could have non-linear factors, 
leading  Frobenius to  develop representation theory to describe the factorisation. In particular if $\hat{G}$ denotes  a complete 
set of non-isomorphic  irreducible representations of $G$  we have, see for example \cite{Formanek},
\be \label{Frobenius} \mathscr{D}_G(  x_{g_1},\ldots ,x_{g_n})=\prod _{\rho \in \hat{G}} \det \left( \sum_{g\in G} x_g \rho(g) \right)^{\deg \rho}. \ee
See Conrad \cite{Conrad} for an account of  the historical development  of \eqref{abelianfactor} and \eqref{Frobenius}.
Notice that the group determinant preserves multiplication in $\mathbb Z [G]$, in the sense that if 
$$ \left(\sum_{g\in G}  a_g g \right)  \left(\sum_{g\in G}  b_g g \right) = \left(\sum_{g\in G}  c_g g \right), \;\;\; c_g =\sum_{uv=g}a_ub_v, $$
then
\be \label{mult}  \mathscr{D}_G(a_{g_1},\ldots ,a_{g_n})  \mathscr{D}_G(b_{g_1},\ldots ,b_{g_n})= \mathscr{D}_G(c_{g_1},\ldots ,c_{g_n}), \ee
since plainly $c_{g_ig_j^{-1}}=\sum_{u\in G} a_{g_i u^{-1}} b_{ug_j^{-1}}$. Finally, we note the trivial bound
$$    \lambda(G) \leq \max\{ 2,  |G|-1\} $$
since, taking $g_1$ to be the identity element, 
$$\mathscr{D}_G(0,1,\ldots ,1)=(-1)^{n-1}(n-1),$$
an easy  evaluation viewed as a Lind Mahler measure $M_{\mathbb Z_n}(-1+(x^n-1)/(x-1))$.

\section{Lind Mahler Measure}\label{MahlerMeasure}
For a polynomial $f(x)=\sum_{j=0}^N a_j x^j$ in $\mathbb Z[x]$  one defines the  traditional {\it logarithmic  Mahler measure} $m(f)$ by 
$$ m(f)=\int_0^1 \log |f(e^{2\pi i\theta})| d \theta, $$
with the classical Lehmer problem \cite{Lehmer} asking whether there is a constant  $c>0$ such that $m(f)=0$ or $m(f)\geq c$.
Lind \cite{Lind} viewed $f(e^{2\pi i\theta})= \sum_{j=0}^N a_j e^{2\pi i j\theta}$ as a linear sum of characters on the group $[0,1)=\mathbb R/\mathbb Z$, and extended the concept of Mahler measure to an arbitrary compact abelian group $G$ with Haar measure $\mu$ and group of characters $\hat{G}$, defining 
$$ m_G(f) = \int_G \log | f| d \mu $$
where $f$ is an element in $\mathbb Z [\hat{G}]$. For example, for a finite abelian group
\be \label{defG} G=\mathbb Z_{n_1} \times \cdots \times \mathbb Z_{n_r} \ee
the characters take the form
$$ \hat{G}= \{\chi_{u}(x_1,\ldots ,x_r)= e^{2\pi i x_1 u_1/n_1}\cdots e^{2\pi i x_r u_r/n_r}\; : \; u=(u_1,\ldots ,u_r)\in G\}, $$
the integral becoming an average over the group $G$
\begin{align*} m_G\left(\sum_{u\in G} a_u \chi_u\right) & = \frac{1}{|G|} \sum_{x=(x_1,\ldots ,x_r)\in G} \log\left|\sum_{u\in G} a_u \chi_u(x_1,\ldots ,x_r)\right| \\
 & = \frac{1}{|G|} \log \prod_{x\in G} \left|\sum_{u\in G} a_u e^{2\pi i u_1x_1/n_1} \cdots e^{2\pi i u_rx_r/n_r}\right|\\
 & =  \frac{1}{|G|} \log \prod_{x\in G} \left|\sum_{u\in G} a_u \chi_x(u) \right|\\
 & =  \frac{1}{|G|}|\mathscr{D}_G(a_{g_1},\ldots ,a_{g_n})|.
\end{align*}
Thus, as observed by Vipismakul \cite{Cid1} in his thesis, the Lind-Mahler measure for a finite abelian group essentially corresponds to evaluating the group determinant. In particular  the corresponding Lehmer problem for the group, that is
determining the  minimal Lind-Mahler measure $m(f)>0$, is the same as finding $\frac{1}{|G|} \log \lambda (G)$. 

As mentioned above cyclic groups were considered in \cite{Lind, Norbert, Pigno1},  with  $G=\mathbb Z_p^r$ and various other groups  investigated  in \cite{dilum, Cid2, pgroups, Stian}. In those papers  the Lind-Mahler measure was mostly viewed as the measure of a polynomial, representing the average of  the polynomial over appropriate roots of unity; that is for a group  $G$ of the form \eqref{defG} and polynomial  $F$ in $\mathbb Z[x_1,\ldots ,x_r]$, we define
$$ m_G(F) =\frac{1}{|G|} \log |M_G(F)|,$$
where
$$M_G(F):= \prod_{u_1=0}^{n_1-1} \cdots \prod_{u_r=0}^{n_r-1} F(e^{2\pi i u_1/n_1},\ldots , e^{2\pi i u_r/n_r}) \in \mathbb Z. $$
Of course this measure is really defined on $\mathbb Z [x_1,\ldots ,x_r] /\langle x_1^{n_1}-1,\ldots ,x_r^{n_r}-1\rangle$, and writing
$$ F(x_1,\ldots ,x_r) =\sum_{u\in G} a_u x_1^{u_1}\cdots x_r^{u_r}  \text{ mod } \langle x_1^{n_1}-1,\ldots ,x_r^{n_r}-1 \rangle $$
we have 
$$M_G(F)=\mathscr{D}_G(a_{g_1},\ldots ,a_{g_n}). $$

Notice that for a polynomial $f$ in $\mathbb Z [x]$ not vanishing on the unit circle the traditional Mahler  measure
is a limit of Lind measures
$$ m(f) = \lim_{n\rightarrow \infty} m_{\mathbb Z_n} (f). $$

It is not immediately clear how to extend  Lind's original definition to a non-abelian finite group. See  Dasbach and Lal\'{i}n \cite{Lalin} for one approach. Here we suggest that the group determinant formulation  provides  a natural way to do this.

For a  finite group $G$ with generators $\alpha_1,\ldots ,\alpha_r$  and relations $\mathscr{U}(\alpha_1,\ldots,\alpha_r)$,
and  a {\it polynomial} $F$ in $\mathbb Z[x_1,\ldots ,x_r]/\mathscr{U}(x_1,\ldots ,x_r)$ 
$$ F(x_1,\ldots ,x_r) =\sum_{g=\alpha_1^{m_1}\cdots \alpha_r^{m_r}\in G} a_g x_1^{m_1} \cdots x_r^{m_r} \text{ mod } \mathscr{U}(x_1,\ldots ,x_r), $$
we define
$$m_G(F) = \frac{1}{|G|} \log |M_G(F)|,\;\;\;  M_G( F):=\mathscr{ D}_G(a_{g_1},\ldots , a_{g_n}). $$
If multiplication of monomials follows the group relations then, as in \eqref{mult}, we recover the usual multiplicative property 
of Mahler measures
$$ M_G(fh)=M_G(f)M_G(h). $$

For example, for the dihedral group
\begin{align*}  D_{2n} & =\langle \mathcal{R},\mathcal{F} \; | \; \mathcal{R}^n=1, \mathcal{F}^2=1,\mathcal{RF}=\mathcal{FR}^{n-1}\rangle \\ & = \{1,\mathcal{R},\ldots ,\mathcal{R}^{n-1}, \mathcal{F},\mathcal{FR},\ldots ,\mathcal{FR}^{n-1}\}, \end{align*}
we define the measure of a polynomial in  $\mathbb Z[x,y] /\langle x^n-1,y^2-1,xy-yx^{n-1}\rangle$, reduced to the form
\be \label{dipoly}   F(x,y) = \sum_{i=0}^{n-1} a_i x^{i} + \sum_{i=0}^{n-1} b_i \: yx^{i},\ee
to be
$$ M_{D_{2n}}(F) = \mathscr{D}_{D_{2n}}(a_1,\ldots ,a_n,b_1,\ldots ,b_n). $$

\section{The dihedral group}\label{dihedral}

For $G=D_{2n}$ the representations are fairly straightforward, see for example Serre \cite[\S5.3]{Serre}. 
We have the  degree 1 representations, the basic characters  $\chi$ with $\chi (\mathcal{F})=\pm 1$ and $\chi(\mathcal{R})=1$ and  when $n$ is even $\chi(\mathcal{R})=-1$, giving us two or four linear factors as $n$ is odd or even. The other representations all  have degree 2 and arise from
$$  \rho(\mathcal{R}^j)= \left( \begin{matrix} w^j & 0 \\ 0 & w^{-j} \end{matrix} \right),\;\; \rho(\mathcal{F R}^j)= \left( \begin{matrix} 0 & w^{-j} \\ w^j & 0 \end{matrix} \right),     $$
where the $w$ run through the complex  $n$th roots of unity.

Writing $a_i$ for the $x_{\mathcal{R}^i}$ and $b_i$ for the $x_{\mathcal{FR}^i}$ these  lead to 
$$ \sum_g x_g \rho(g) =\left( \begin{matrix}\displaystyle  \sum_{i=0}^{n-1} a_i w^{i} & \displaystyle \sum_{i=0}^{n-1} b_i w^{-i} \\
\displaystyle\sum_{i=0}^{n-1} b_i w^{i} & \displaystyle\sum_{i=0}^{n-1} a_i w^{-i}   \end{matrix} \right),$$
and  the  real quadratic factors, with coefficients in $\mathbb Q(\cos(2\pi/n))$,
$$Q(w):=\left( \sum_{i=0}^{n-1} a_i w^{i}\right)\left(  \sum_{i=0}^{n-1} a_i w^{-i} \right) - \left( \sum_{i=0}^{n-1} b_i w^{i}\right)\left(  \sum_{i=0}^{n-1} b_i w^{-i} \right). $$

Thus with $w_n$ denoting the primitive $n$th root of unity
\be \label{rootofunity} w_n:= e^{2\pi i /n}, \ee
the group determinant factors in $\mathbb C[a_1,\ldots ,a_n,b_1,\ldots ,b_n]$ as follows:

\vspace{2ex}
\noindent
{\bf When $n=2k+1$ is odd}

\noindent
$\mathscr{D}_G(a_0,\ldots ,a_{n-1},b_0,\ldots ,b_{n-1})$ factors as two linear and the square of $k$ quadratics
$$\left(\sum_{i=0}^{n-1} a_i + \sum_{i=0}^{n-1}b_i\right)\left(\sum_{i=0}^{n-1} a_i - \sum_{i=0}^{n-1}b_i\right)\prod_{j=1}^k Q(w_n^j)^2. $$

\vspace{2ex}
\noindent
{\bf When $n=2k$ is even}

\noindent
$\mathscr{D}_G(a_0,\ldots ,a_{n-1},b_0,\ldots ,b_{n-1})$ factors as the product of  four linear factors
$$ \left(\sum_{i=0}^{n-1} a_i + \sum_{i=0}^{n-1}b_i\right)\left(\sum_{i=0}^{n-1} a_i - \sum_{i=0}^{n-1}b_i\right)\left(\sum_{i=0}^{n-1} (-1)^ia_i + \sum_{i=0}^{n-1} (-1)^ib_i\right)\left(\sum_{i=0}^{n-1}(-1)^i a_i - \sum_{i=0}^{n-1}(-1)^ib_i\right) $$
and the square of $k-1$ quadratics
$$ \prod_{j=1}^{k-1} Q(w_n^j)^2. $$
Note that in both cases we can write
$$ \mathscr{D}_G(a_0,\ldots ,a_{n-1},b_0,\ldots ,b_{n-1})=\prod_{j=0}^{n-1} Q(w_n^j). $$


\vspace{1ex}

Writing these expressions in terms of  values at roots of unity thus motivates us to define the Lind-Mahler
measure of a polynomial,
$$F(x,y)=\sum a_{ij} y^ix^j \in \mathbb Z[x,y],$$
or more generally in $\mathbb Z[x,x^{-1},y,y^{-1}],$
relative to the dihedral group  $G=D_{2n}$  to  be
$$ m_G(F)=\frac{1}{2n} \log |M_G(F)|, $$
with
\begin{align*} M_G(F): & =\prod_{j=0}^{n-1} \frac{1}{2}\Big( F(w_n^j,1)F(w_n^{-j},-1)+ F(w_n^{-j},1)F(w_n^{j},-1)\Big),\\
& = M_{\mathbb Z_n} \left( \frac{1}{2} \left(F(x,1)F(x^{-1},-1)+F(x,-1)F(x^{-1},1)\right)\right), 
\end{align*}
where if we want the usual multiplicative property $M_G(fg)=M_G(f)M_G(g)$ we must use the dihedral relationship
on monomials, $x^jy=yx^{n-j}$, when multiplying two  polynomials.

Notice  for an $F(x,y)=f(x)+yg(x)$ with $f(x),g(x)$ in $\mathbb Z [x,x^{-1}]$ we have 
$$ M_{D_{2n}}(F)=M_{\mathbb Z_n}\Big( f(x)f(x^{-1})-g(x)g(x^{-1})\Big). $$
If $f(x)=x^kf(x^{-1})$, $g(x)=x^{k}g(x^{-1})$ for some $k$ we have
$$ M_{D_{2n}}(F) = M_{\mathbb Z_n \times \mathbb Z_2}(F), $$
a reciprocal property similar to  Dasbach and Lal\'{i}n's \cite[Theorem 12]{Lalin}.

We also have
$$ M_{D_{2n}}(f(x)) =M_{D_{2n}}(yf(x))=M_{\mathbb Z_n}(f(x))^2, $$
and so $\lambda (D_{2n})\leq \lambda(\mathbb Z_n)^2$.

\section{Minimal Values of Dihedral Determinants and Measures} \label{values}

In this section we obtain some restrictions on the values taken by a dihedral group determinant with integer variables. These will be enough to determine the minimal non-trivial determinant for any  dihedral group $G=D_{2n}$ of order less than $3.79\times 10^{47}$.

\begin{theorem}\label{main}
$$\lambda(D_{2n})=\begin{cases}  3, & \text{ if $3\nmid n$, }\\
 4, & \text{ if $n=3m$, \; $2\nmid m$, }\\
 5, & \text{ if $n=2\cdot 3m$,\; $5\nmid m$, }\\
 7, & \text{ if $n=2\cdot 3\cdot 5 m$,\; $7\nmid m$, }\\
11, & \text{ if $n=2\cdot 3\cdot 5 \cdot 7 m$,\; $11\nmid m$, }\\
13, & \text{ if $n=2\cdot 3\cdot 5 \cdot 7 \cdot 11 m$,\; $13\nmid m$, }\\
16, & \text{ if $n=2\cdot 3\cdot 5 \cdot 7 \cdot 11\cdot 13  m$,\; $2\nmid m$, }\\
17, & \text{ if $n=2^2\cdot 3\cdot 5 \cdot 7 \cdot 11\cdot 13  m$,\; $17\nmid m$, }\\
19, & \text{ if $n=2^2\cdot 3\cdot 5 \cdot 7 \cdot 11\cdot 13 \cdot 17 m$,\; $19\nmid m$, }\\
23, & \text{ if $n=2^2\cdot 3\cdot 5 \cdot 7 \cdot 11\cdot 13 \cdot 17 \cdot 19m$,\; $23\nmid m$, }\\
27, & \text{ if $n=2^2\cdot 3\cdot 5 \cdot 7 \cdot 11\cdot 13 \cdot 17 \cdot 19\cdot 23m$,\; $3\nmid m$, }\\
29, & \text{ if $n=2^2\cdot 3^2\cdot 5 \cdot 7 \cdot 11\cdot 13 \cdot 17 \cdot 19\cdot 23m$,\; $29\nmid m$, }\\
\end{cases}$$
and  for primes $p=31$ to $113$
$$ \lambda (D_{2n}) = p,\;\;\;  \text{ if $n=2^2\cdot 3^2\cdot \left( \prod_{\substack{ \text{ all primes  }\\ 5\leq q <p}}q\right)\cdot m$,\;\;  $p \nmid m$.} $$
\end{theorem}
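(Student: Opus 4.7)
The plan is to use the reduction
$$ M_{D_{2n}}\!\bigl(f(x)+yg(x)\bigr) \;=\; M_{\mathbb Z_n}\!\bigl(P(x)\bigr), \qquad P(x) = f(x)f(x^{-1}) - g(x)g(x^{-1}), $$
already established in Section~\ref{dihedral}. Every integer point in $\mathbb Z^{2n}$ for the dihedral determinant corresponds uniquely to such an $F = f(x)+yg(x)$, so $\lambda(D_{2n})$ is the minimum nontrivial value of $|M_{\mathbb Z_n}(P)|$ as $P$ ranges over reciprocal Laurent polynomials of the form $ff^* - gg^*$, where $f^*(x):=f(x^{-1})$. This recasts the problem as a constrained cyclic Lind--Lehmer problem on $\mathbb Z_n$, for which the full cyclic case is handled by the theorems of Kaiblinger \cite{Norbert} and Pigno--Pinner--Vipismakul \cite{Pigno1} under the divisibility conditions on $n$ listed in the statement.

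For the \textbf{upper bounds} I would exhibit an explicit $F = f(x)+yg(x)$ in each case, tailored to the prime divisors of $n$. For example, when $3 \nmid n$ the choice $f = 1+x$, $g = x$ gives $P = 1 + x + x^{-1}$ with $|M_{\mathbb Z_n}(P)| = 3$; when $n = 3m$ with $m$ odd, $f = 1$, $g = x-1$ gives $P = -1 + x + x^{-1}$ with $|M_{\mathbb Z_n}(P)| = 4$; and so on. The general idea is to pick $P$ that factors as a product of short cyclotomic-type polynomials whose values at the $n$th roots of unity yield exactly the desired small integer. The three prime-power cases ($4$, $16$, $27$) are engineered by choosing reciprocal $P$ whose $2$- or $3$-adic valuations at the $n$th roots of unity pile up to the required power of the small prime.

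For the \textbf{lower bounds} --- the heart of the matter --- I would combine the cyclic Lind--Lehmer classification with restrictions that come from the special form $P = ff^* - gg^*$. The key inputs are: (i) $P$ is reciprocal, so $P(1) = (f(1)-g(1))(f(1)+g(1))$ is a difference of two squares of integers, giving the congruence $P(1) \not\equiv 2 \pmod 4$, and similarly for $P(-1)$ when $2\mid n$; (ii) the factor of $M_{\mathbb Z_n}(P)$ at each primitive $d$th root of unity, for $d\mid n$, is a norm from $\mathbb Q(\zeta_d)$ to $\mathbb Q(\zeta_d + \zeta_d^{-1})$ of $|f(\zeta_d)|^2 - |g(\zeta_d)|^2$, which severely restricts its value modulo the primes ramifying in these fields; (iii) the Kaiblinger--Pigno classification pins down exactly which small nontrivial values of $|M_{\mathbb Z_n}(\cdot)|$ occur. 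Combining (i)--(iii) with case-by-case sieving on the prime divisors of $n$ rules out every integer strictly between $1$ and the claimed minimum, while the minimum itself is shown to be attained by the construction of the previous paragraph.

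The \textbf{main obstacle} is the case-by-case lower bound: each prime divisor of $n$ unlocks a new cyclotomic factor of $x^n-1$ at which $P = ff^* - gg^*$ might take a small value, so one must carefully check that the reciprocal/norm structure does not allow a smaller integer to appear in $|M_{\mathbb Z_n}(P)|$. This is exactly the computation that runs out of steam once $n$ carries too many small prime factors --- the cutoff $1.89\times 10^{47}$ is the product $2^2\cdot 3^2\cdot\prod_{5\le p\le 113}p$, and pushing beyond $p=113$ would require a genuinely new idea to handle the prime $127$ and the increasingly intricate cyclotomic interactions it introduces.
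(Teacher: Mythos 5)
Your reduction to $M_{\mathbb Z_n}\bigl(f(x)f(x^{-1})-g(x)g(x^{-1})\bigr)$ and your use of explicit constructions for the upper bounds match the paper (its Lemma \ref{achieve} produces every odd $m$ coprime to $n$ via $f=(x^{t+1}-1)/(x-1)$, $g=(x^t-1)/(x-1)$, and the prime powers $4$, $16$, $27$ via $y$-free polynomials such as $x+1$ and $x^{2^{\alpha}}+1$). Your point (i) is also essentially the paper's argument for the prime $2$ when $2\nmid n$: the factors at primitive $d$th roots with $d>2$ are squares (each conjugate pair contributes the same value of $Q$ twice), so a determinant equal to $\pm 2$ would force $P(\pm 1)=f(\pm1)^2-g(\pm1)^2\equiv 2\pmod 4$, which is impossible.

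The genuine gap is in the lower bound for the primes $p\mid n$, where your inputs (ii) and (iii) cannot carry the load. The Kaiblinger and Pigno--Pinner results give only $\lambda(\mathbb Z_n)$, not a classification of all achievable values of $M_{\mathbb Z_n}$, and even the divisibility lemma underlying them (if $p^{\alpha}\|n$ and $p\mid M_{\mathbb Z_n}$ then $p^{\alpha+1}\mid M_{\mathbb Z_n}$) is too weak here: for instance, when $5\| n$ it permits $M_{\mathbb Z_n}=25$ (indeed $M_{\mathbb Z_p}(\Phi_p(x)+(1-x))=p^2$), so it cannot rule out the value $25$, which is needed for every case of the theorem with claimed minimum $27$, $29$ or $p\ge 31$. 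What is required is the strictly stronger, dihedral-specific statement of Lemma \ref{div}: if $p^{\alpha}\|n$ and $p\mid\mathscr{D}$ then $p^{2\alpha+1}\mid\mathscr{D}$ (and $2^{2\alpha+4}\mid\mathscr{D}$ when $2^{\alpha}\|n$, $\alpha\ge 2$, which is what rules out $16$ once $4\mid n$). Proving this needs more than ``norms restrict values modulo ramifying primes'': one must group the factors $Q(w_m^u w_{p^v}^j)$ by the $p$-power level $v$ of the root of unity, establish the congruences $A_d(v)\equiv A_d(0)^{\phi(p^v)}$ and $A_1(v)\equiv A_0^{\phi(p^v)/2}\pmod p$ so that $p$ dividing one level forces $p$ to divide all $\alpha+1$ levels, and then exploit the squaring of the quadratic blocks; the $2$-adic case further requires a mod $8$ analysis of $H(1)$, $H(-1)$, $H(i)$. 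Without this quantitative chain your sieve does not close. (A side note: your diagnosis of the cutoff is also off --- the obstruction at $N=2^2\cdot 3^2\prod_{5\le p\le 113}p$ is not the prime $127$ per se but that $5^3=125<127$ is genuinely attainable when $5\|n$, leaving $\lambda(D_{2N})\in\{125,127\}$ undecided.)
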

This just leaves the groups $D_{2n}$ when $n$ is a multiple of 
$$N=2^2\cdot 3^2\cdot 5 \cdot 7 \cdot 11 \cdot \; \cdots \; \cdot 107\cdot 109\cdot  113,$$ 
where $\lambda(D_{2N})=125$ or $127$.  Some multiples of $N$ can be determined. For example when
 $n$ a multiple of $5N$ we can rule out 125 and  for $p=127$ to $241$ 
$$ \lambda (D_{2n})=p, \;\;\; \text{ if $n=2^2\cdot 3^2\cdot 5^2\cdot \left( \prod_{7\leq q <p} q \right) \cdot m$,  $\;\; p\nmid m$,} $$
with $\lambda( D_{2n})=243$ or $251$  when $n=2^2\cdot 3^2 \cdot 5^2 \cdot 7\cdots 241$. For  $n$ a multiple of $15N$ 
$$\lambda (D_{2n})=\begin{cases}251, &  \text{ if $n=2^2\cdot 3^3 \cdot 5^2 \cdot \left( \prod_{7\leq q \leq 241} q \right) \cdot m$,  $\;\; 251 \nmid m$,}\\
 256, & \text{ if $n=2^2\cdot 3^3 \cdot 5^2 \cdot \left( \prod_{7\leq q \leq 251} q \right) \cdot m$,  $\;\; 2 \nmid m$,} \\
p, &  \text{ if $n=2^3\cdot 3^3 \cdot 5^2 \cdot \left( \prod_{7\leq q <p} q \right) \cdot m$,  $\;\; p \nmid m$,} \end{cases}$$
for primes $p$ from  $257$ to $337$, with  $\lambda( D_{2n})=343$ or $347$  when $n=2^3\cdot 3^3 \cdot 5^2 \cdot 7\cdots 337.$ 

\vspace{1ex}
Theorem \ref{main} will follow immediately from two lemmas.
Laquer \cite{Laquer} and Newman \cite{Newman1}  proved that $\mathscr{D}_{\mathbb Z_n}(a_0,\ldots ,a_{n-1})$ achieves all integers coprime to $n$. See Mahoney and Newman \cite{Mahoney} for other abelian groups.
We similarly show for $G=D_{2n}$ that any positive integer coprime to $2n$ will be the absolute value of an integer  determinant. In particular we can achieve any odd  prime $p\nmid n$:

\begin{lemma}\label{achieve}
Let $G=D_{2n}$. 

If $m=2t+1$ with $(m,n)=1$ and $0\leq t<n$ then 
$$|\mathscr{D}_G(\overbrace{1,\ldots ,1}^{t+1},\overbrace{0,\ldots ,0}^{n-1-t},\overbrace{1,\ldots ,1}^{t},\overbrace{0,\ldots ,0}^{n-t})|=m. $$
 For $t\geq n$ we can still achieve $m$ by wrapping around the remaining 1's;
$$a_i=|\{ 0\leq j\leq t\;: j\equiv i \text{ mod } n\}|,\;\;\; b_i=|\{ 0\leq j\leq t-1\;: j\equiv i \text{ mod } n\}|.$$

If $2\nmid n$ then $\mathscr{D}_{G}(1,1,0,\ldots ,0)=2^2$.

If $2 || n $ then  $\mathscr{D}_{G}(1,0,1,0,\ldots ,0)=2^4$.

If $2^2 || n $ then  $\mathscr{D}_{G}(1,0,0,0,1,0,\ldots ,0)=2^8$.

If $3||n$  then  $|\mathscr{D}_{G}(\underbrace{1,0,0,1,0,\ldots ,0}_n\underbrace{1,0,\ldots , 0}_n)|=3^3$.

More generally if $p$ is odd and $p^{\alpha}|| n$ then we can achieve $p^{p^{\alpha}}$ as the absolute value of an integer group determinant,
if $2^{\alpha}||n$ then we can achieve $2^{2^{\alpha+1}}$.

\end{lemma}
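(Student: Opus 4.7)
The plan is to exploit the reduction $M_{D_{2n}}(f(x)+yg(x))=M_{\mathbb{Z}_n}(f(x)f(x^{-1})-g(x)g(x^{-1}))$ from Section \ref{dihedral}, so that every assertion becomes a computation of a cyclic Mahler measure, i.e.\ a product over $n$th roots of unity.

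First, for the generic case $m=2t+1$ with $\gcd(m,n)=1$ and $0\le t<n$, the described tuple corresponds to $F(x,y)=f(x)+yg(x)$ with $f(x)=1+x+\cdots+x^t$ and $g(x)=1+x+\cdots+x^{t-1}$. The workhorse identity, verified by expanding $(x^{t+1}-1)^2-x(x^t-1)^2=(x-1)(x^{2t+1}-1)$, is
$$f(x)^2-xg(x)^2=\frac{x^m-1}{x-1}.$$
Since $f(x^{-1})=x^{-t}f(x)$ and $g(x^{-1})=x^{-(t-1)}g(x)$, this gives $f(x)f(x^{-1})-g(x)g(x^{-1})=x^{-t}(x^m-1)/(x-1)$. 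The monomial $x^{-t}$ contributes only a unit under $M_{\mathbb{Z}_n}$, and the remaining factor evaluates as $\prod_{j=0}^{n-1}(1+w_n^j+\cdots+w_n^{(m-1)j})$: this equals $m$ at $j=0$, while for $j\neq 0$ each factor is $(w_n^{jm}-1)/(w_n^j-1)$, whose product over $j=1,\ldots,n-1$ telescopes to $1$ because $\gcd(m,n)=1$ makes $j\mapsto jm$ a permutation of $\{1,\ldots,n-1\}$. The wrap-around for $t\ge n$ changes nothing, since the reduced $f$ and $g$ take the same values at $n$th roots of unity.

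To handle the remaining prime-power cases I would substitute $x\mapsto x^{p^\alpha}$ in the generic construction. For odd $p$ with $p^\alpha\|n$, take $t=(p-1)/2$ and set $f(x)=1+x^{p^\alpha}+\cdots+x^{tp^\alpha}$, $g(x)=1+x^{p^\alpha}+\cdots+x^{(t-1)p^\alpha}$; the same identity produces
$$f(x)f(x^{-1})-g(x)g(x^{-1})=x^{-tp^\alpha}\,\Phi_{p^{\alpha+1}}(x),\qquad \Phi_{p^{\alpha+1}}(x)=1+x^{p^\alpha}+\cdots+x^{(p-1)p^\alpha}.$$
Writing $n=p^\alpha k$ with $\gcd(p,k)=1$, as $j$ runs from $0$ to $n-1$ the value $w_n^{jp^\alpha}$ hits each $k$th root of unity exactly $p^\alpha$ times, so $M_{\mathbb{Z}_n}(\Phi_{p^{\alpha+1}})=\bigl(\prod_{i=0}^{k-1}\Phi_{p^{\alpha+1}}(w_k^i)\bigr)^{p^\alpha}$; the inner product telescopes to $p$ by the same permutation argument (using $\gcd(p,k)=1$), giving $p^{p^\alpha}$. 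The $3\|n$ example is the instance $p=3,\,\alpha=1$. For powers of $2$, take $f(x)=1+x^{2^\alpha}$ and $g=0$: with $n=2^\alpha k$, $k$ odd, the inner measure is $\prod_{j=0}^{n-1}(1+w_n^{j2^\alpha})=(\prod_{i=0}^{k-1}(1+w_k^i))^{2^\alpha}=2^{2^\alpha}$ (evaluating $\prod_{i=0}^{k-1}(X-w_k^i)=X^k-1$ at $X=-1$ with $k$ odd), and the dihedral determinant, being the square, is $2^{2^{\alpha+1}}$.

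The hardest part is purely bookkeeping: keeping track of the monomial factors $x^{-t}$ and $x^{-tp^\alpha}$, which always fold into units in $M_{\mathbb{Z}_n}$, and correctly reducing products over $n$th roots of unity to products over $k$th roots of unity when $n=p^\alpha k$. Once the polynomial identity $f^2-xg^2=(x^m-1)/(x-1)$ is in hand, every case becomes a coprimality-driven permutation argument on roots of unity.
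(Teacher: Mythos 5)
Your proposal is correct and follows essentially the same route as the paper: the identical choice of polynomials $f(x)=(x^{t+1}-1)/(x-1)$, $g(x)=(x^{t}-1)/(x-1)$ (and their substitutions $x\mapsto x^{p^{\alpha}}$, resp.\ $f=1+x^{2^{\alpha}}$, $g=0$), reduced via $M_{D_{2n}}(f+yg)=M_{\mathbb Z_n}(f(x)f(x^{-1})-g(x)g(x^{-1}))$. The only difference is cosmetic: the paper evaluates the resulting cyclic measures using the Apostol--Lehmer formula for resultants of cyclotomic polynomials, whereas you evaluate them directly by the telescoping permutation argument on roots of unity; both computations are valid.
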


Without absolute values we  show:

\begin{theorem} \label{coprime2n}
Let $G=D_{2n}$ and $m$  be an  integer coprime to $2n$.

If $n$ is odd then $m$ is a $\mathscr{D}_G(a_1,...,a_n,b_1,...,b_n)$ for some integers $a_1,...,a_n,b_1,...,b_n$.

If $n$ is even then either $m$ or $-m$ is a determinant, whichever is $1$ mod $4$.

If $n$ is even the odd determinants are all $1$ mod 4.

\end{theorem}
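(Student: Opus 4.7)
The plan is to produce an explicit dihedral determinant equal to the prescribed one of $\pm m$ using the family of assignments from Lemma~\ref{achieve}, and then, for $n$ even, to show that no odd determinant is $\equiv 3 \pmod 4$, which forces the sign in the construction to match the theorem and yields the third statement.

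For the construction, set $|m| = 2t+1$ and take $A(x) = 1 + x + \cdots + x^t$, $B(x) = 1 + x + \cdots + x^{t-1}$ (coefficients wrapped mod $n$ if $t \ge n$). Since $A(x) = x^t + B(x)$,
$$Q(w) = A(w)A(w^{-1}) - B(w)B(w^{-1}) = 1 + w^{t} B(w^{-1}) + w^{-t} B(w) = \sum_{j=-t}^{t} w^{j},$$
so for $j \ne 0$ coprimality gives $Q(w_n^j) = w_n^{-jt}(w_n^{j|m|} - 1)/(w_n^j - 1)$. Multiplying over $j = 0, 1, \ldots, n-1$, the ratio $\prod_{j=1}^{n-1}(w_n^{j|m|} - 1)/(w_n^j - 1)$ equals $1$ (multiplication by $|m|$ permutes the non-zero residues mod $n$), and only the phase $\prod_{j=1}^{n-1} w_n^{-jt} = w_n^{-tn(n-1)/2}$ survives; this is $1$ for $n$ odd and $(-1)^t$ for $n$ even because $n(n-1)/2 \equiv n/2 \pmod n$. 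Hence $\mathscr{D}_G = |m|$ for $n$ odd and $\mathscr{D}_G = (-1)^t |m|$ for $n$ even, and a quick parity check shows $(-1)^t |m|$ is whichever of $\pm m$ is $\equiv 1 \pmod 4$. For $n$ odd, swapping $A$ and $B$ replaces $Q$ by $-Q$ and flips the sign through $(-1)^n = -1$, so both $\pm |m|$, and in particular every $m$ coprime to $2n$, are realised.

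For the mod $4$ statement, write $n = 2k$ and decompose $\mathscr{D}_G = L_1 L_2 L_3 L_4 \cdot P$ where $P = \prod_{j=1}^{k-1} Q(w_n^j)^2$. With $\sigma_a = A(1)$, $\sigma_b = B(1)$, $\tau_a = A(-1)$, $\tau_b = B(-1)$ the linear part equals $(\sigma_a^2 - \sigma_b^2)(\tau_a^2 - \tau_b^2)$. If it is odd then exactly one of $\{\sigma_a, \sigma_b\}$ is odd, and since $\sigma_\bullet \equiv \tau_\bullet \pmod 2$ the same parity pattern holds for $\{\tau_a, \tau_b\}$, making both factors $\pm 1 \pmod 4$ with matching signs, so their product is $\equiv 1 \pmod 4$. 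For $P$, I observe that $\alpha := \prod_{j=1}^{k-1} Q(w_n^j)$ is an algebraic integer in $\mathbb Q(\cos(2\pi/n))$; the Galois group $(\mathbb Z/n)^*/\{\pm 1\}$ acts by $Q(w_n^j) \mapsto Q(w_n^{aj})$, and since every unit $a \pmod n$ is odd when $n$ is even we have $a k \equiv k \pmod n$, so the action fixes the classes $\{0\}$ and $\{k\}$ and permutes the remaining $k-1$ classes that index $\alpha$. Hence $\alpha \in \mathbb Z$, $P = \alpha^2$, and $P$ odd forces $P \equiv 1 \pmod 8$. Combining, any odd dihedral determinant with $n$ even is $\equiv 1 \pmod 4$, which proves the third statement and confirms the sign produced by the construction.

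The main obstacle is the Galois argument identifying $\alpha$ with a rational integer: the factorisation runs $Q(w_n^j)$ only over the half-range $1 \le j \le k-1$ rather than a full orbit system, so the rationality of $\alpha$ is not a formal norm but rests on the accidental stability of the class $\{k\}$ under the whole unit group of $\mathbb Z/n$, which uses $n$ even in an essential way. Everything else is a direct telescoping computation together with elementary parity bookkeeping.
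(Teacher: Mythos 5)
Your proposal is correct and follows essentially the same route as the paper: the same polynomial $F=\frac{x^{t+1}-1}{x-1}+y\,\frac{x^{t}-1}{x-1}$ realises the value $\pm m$ (with swapping $f$ and $g$ flipping the sign exactly when $n$ is odd), and the mod-$4$ statement is obtained by the same decomposition into the four linear factors times the square of the product of quadratic factors, with the same parity bookkeeping $f(-1)\equiv f(1)\bmod 2$. The only genuine addition is your explicit Galois argument that $\prod_{j=1}^{k-1}Q(w_n^j)$ is a rational integer; the paper asserts this implicitly at this point (justifying the analogous integrality in the proof of Lemma \ref{div} by noting the products run over complete sets of conjugates), so your care here is welcome but does not change the method.
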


Next, we obtain a  condition on divisibility by  primes dividing $2n$:

\begin{lemma} \label{div}
Let $G=D_{2n}$ and $a_0,\ldots ,a_{n-1},b_0,\ldots ,b_{n-1}$  be in $\mathbb Z$.

\noindent
If $p^{\alpha}||n$ and $p\mid \mathscr{D}(a_0,...,a_{n-1},b_0,...,b_{n-1})$ then $p^{2\alpha+1}\mid \mathscr{D}(a_0,...,a_{n-1},b_0,...,b_{n-1})$.

\noindent
If $2\nmid n$ and $2\mid \mathscr{D}(a_0,...,a_{n-1},b_0,...,b_{n-1})$ then $4\mid \mathscr{D}(a_0,...,a_{n-1},b_0,...,b_{n-1})$.

\noindent
If $2||n$ and $2\mid \mathscr{D}(a_0,...,a_{n-1},b_0,...,b_{n-1})$ then $2^4 ||  \mathscr{D}(a_0,...,a_{n-1},b_0,...,b_{n-1})$ or  $2^6\mid \mathscr{D}(a_0,...,a_{n-1},b_0,...,b_{n-1})$.


\noindent
If $2^{\alpha}||n$, $\alpha\geq 2$,  and $2\mid \mathscr{D}(a_0,...,a_{n-1},b_0,...,b_{n-1})$ then $2^{2\alpha+4}\mid \mathscr{D}(a_0,...,a_{n-1},b_0,...,b_{n-1})$.

\end{lemma}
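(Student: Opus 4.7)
My plan is to use the factorization
\[
\mathscr{D}_{D_{2n}} = \prod_{d\mid n} R_d, \qquad R_d := \prod_{w\text{ primitive } d\text{-th root of }1} Q(w)\ \in\ \mathbb{Z},
\]
from Section~\ref{dihedral}, together with the key structural observation that for $d>2$ the pairing $w\leftrightarrow w^{-1}$ of primitive $d$-th roots combines with $Q(w)=Q(w^{-1})$ to give $R_d=(\prod_{\{w,w^{-1}\}} Q(w))^{2}$, a perfect square. Hence $v_p(R_d)$ is automatically even whenever $d>2$, and the only non-square factors are $R_1 = A(1)^2 - B(1)^2$ and, when $n$ is even, $R_2 = A(-1)^2 - B(-1)^2$. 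The other ingredient is the Frobenius congruence
\[
Q(w)^p \equiv Q(w^p) \pmod{p} \quad \text{in }\mathbb{Z}[w],
\]
coming from $A(w)^p\equiv A(w^p)$ and the same for $B$.

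Step one is a tower of divisibilities. Writing a divisor $d\mid n$ as $d=p^{k_0}d'$ with $p\nmid d'$, the Frobenius congruence gives the descent $p\mid Q(w)\Rightarrow p\mid Q(w^p)$, and by the same congruence applied to any $w$ with $w^p=v$, the converse ascent $Q(w)^p\equiv Q(v)\equiv 0\pmod{\mathfrak{p}}\Rightarrow Q(w)\equiv 0\pmod{\mathfrak{p}}$ in $\mathbb{Z}[w]$. Iterating, if $p\mid \mathscr{D}$ then $p\mid R_{p^k d'}$ for every $k=0,1,\dots,\alpha$ and some $d'\mid n/p^\alpha$. When $d'>2$ every term of the tower is a perfect square divisible by $p$, giving $v_p(\mathscr{D})\geq 2(\alpha+1)$; when $d'\in\{1,2\}$, the single non-square term $R_{d'}$ contributes $v_p\geq 1$ while the remaining $\alpha$ perfect squares each contribute $v_p\geq 2$, still giving $v_p(\mathscr{D})\geq 2\alpha+1$. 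This settles the first clause.

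For the $p=2$ refinements I would exploit the parity identities $A(1)\equiv A(-1)\pmod 2$ and $B(1)\equiv B(-1)\pmod 2$ (their differences being twice a coefficient sum). Since $R_i=(A(\pm1)-B(\pm1))(A(\pm1)+B(\pm1))$ has two same-parity factors, $v_2(R_i)\neq 1$, and $R_1$ is odd iff $R_2$ is odd. Splitting on whether the pair $(A(1),B(1))$ is both odd, both even, or of mixed parity then pins $v_2(R_1)+v_2(R_2)$ to $\{0\}\cup\{4\}\cup\{6,7,8,\dots\}$; combined with the even $v_2$-contributions from the perfect squares $R_d$ with $d>2$, this rules out $v_2(\mathscr{D})=5$, which is exactly part~(c). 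The same parity argument applied to $R_1$ alone gives part~(b).

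Part~(d) is where the real work lies. Naive tower-counting only yields $v_2(\mathscr{D})\geq 2\alpha+2$, and the extra $+2$ must come from interactions between $R_1,R_2$ and the $2$-power factors $R_{2^k}$. The new input I would use is $|A(i)|^2\equiv A(1)\pmod 2$ (an integer and its square share parity), so that $Q(i)\equiv A(1)+B(1)\pmod 2$: whenever $R_1$ is even, $R_4=Q(i)^2$ therefore contributes $v_2\geq 2$ for free. A refinement modulo~$4$ of the four subcases of $(|A(i)|^2,|B(i)|^2)\bmod 4$ then reveals a dichotomy, either $v_2(R_4)\geq 4$, or $v_2(R_4)=2$ but then $(A(\pm1),B(\pm1))$ is forced into a substructure producing $v_2(R_1)+v_2(R_2)\geq 7$. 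For $\alpha>2$ I would iterate via the analogous congruences $Q(\zeta_{2^k})^{2}\equiv Q(\zeta_{2^{k-1}})\pmod 2$ in $\mathbb{Z}[\zeta_{2^k}]$. The hard part will be carrying out this bookkeeping at the totally ramified primes above $2$ in $\mathbb{Z}[\zeta_{2^\alpha}]$, where the Frobenius congruences must be tracked modulo increasingly high powers of the local uniformizer.
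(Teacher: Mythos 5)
Your treatment of the first three clauses is sound and is essentially the paper's own argument in different notation: your block $R_d$ (for $d>2$) is the square of the paper's conjugate-paired factor $A_d(v)$, your Frobenius congruence $Q(w)^p\equiv Q(w^p)\pmod p$ is the paper's congruence modulo the ramified prime $\pi_\alpha=1-w_{p^\alpha}$, and your parity analysis of $R_1=f(1)^2-g(1)^2$ and $R_2=f(-1)^2-g(-1)^2$ (mixed parity gives odd, both odd gives $2^3\mid R_i$, both even gives $v_2(R_i)\in\{2\}\cup[4,\infty)$) is exactly how the paper proves the second and third clauses.

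The fourth clause, however, has a genuine gap. You locate the extra $+2$ beyond the naive bound $2\alpha+2$ exclusively in ``interactions between $R_1,R_2$ and the $2$-power factors $R_{2^k}$,'' but $2\mid\mathscr{D}$ can occur with $R_1$ and $R_2$ odd: for odd $d'>1$ the map $w\mapsto w^2$ permutes the primitive $d'$-th roots of unity, so evenness of $R_{d'}$ never descends to $R_1$, and the entire power of $2$ may live in the tower $R_{d'},R_{2d'},\dots,R_{2^{\alpha}d'}$ of $\alpha+1$ perfect squares, for which your counting yields only $2\alpha+2$. The paper closes precisely this case by forming $H(x)=\prod_{(j,d')=1}Q(xw_{d'}^j)$, observing $H\in\mathbb Z[x+x^{-1}]$, writing $H=\sum_j B_j(x+x^{-1})^j$, and comparing $A_{d'}(0)^2=H(1)$, $A_{d'}(1)^2=H(-1)$ and $A_{d'}(2)=H(i)=B_0$ modulo $8$ to get $A_{d'}(0)^2+A_{d'}(1)^2\equiv 2A_{d'}(2)\pmod 8$; when $2\|A_{d'}(2)$ this forces $4\mid A_{d'}(0)$ or $4\mid A_{d'}(1)$, which supplies the two missing factors of $2$. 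You need this mod-$8$ identity (or an equivalent); the mod-$2$ Frobenius congruences you propose to iterate cannot produce it. Even for the $d'=1$ tower your argument is only an unexecuted sketch: the ``dichotomy revealed by a refinement modulo $4$'' is asserted rather than proved (the paper's version uses $Q(1)+Q(-1)\equiv 2Q(i)\pmod 8$ together with $4\mid Q(\pm1)$ and concludes $v_2(R_1)+v_2(R_2)\geq 6$ when $2\|Q(i)$, not the $\geq 7$ you claim, so that bound should be rechecked), and you yourself flag that the ramified-prime bookkeeping remains to be done.
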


We observe that
\begin{align*}  & M_{D_{2p}}\Big( \frac{x^{(p+1)/2}-1}{x-1} +(x-1)+ y \left( \frac{x^{(p-1)/2}-1}{x-1} +x^{-1}(x-1)\right)\Big) \\ &  = M_{\mathbb Z_p}\left( x^{-(p-1)/2} \Phi_p(x) +x^{-1}(x-1)^2\right)= p^3,
\end{align*}
so extra conditions would be needed on $n$ to rule out $p^3$ when $p||n$. A similar problem occurs in the cyclic case where
$$ M_{\mathbb Z_p}\left( \Phi_p(x)+(1-x)\right)=p^2. $$

\section{ The Dihedral Groups of Order $2p$ and $4p$ }   \label{D2p}

So far we have concentrated on  just finding the smallest non-trivial group determinant or measure.
Laquer \cite{Laquer} and Newman \cite{Newman1} obtained a complete description of the group determinants
for $G=\mathbb Z_p$, showing that one achieves anything of the form $p^a m$, $p\nmid m$ with $a=0$ or $a\geq 2$.
Here we similarly give a complete description of the measures for 
the dihedral group $D_{2p}$:

\begin{theorem} \label{D_2p}
Suppose that $G=D_{2p}$ with $p$ an odd prime. Then the values achieved as integer  group determinants take the form
$2^a p^b m$ for any integer $m$ with $(m,2p)=1$,  $a=0$ or $a\geq 2$ and $b=0$ or $b\geq 3$. 

\end{theorem}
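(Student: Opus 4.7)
There are two directions to establish. For the forward direction, apply Lemma \ref{div} with $n=p$: since $p\|n$ it gives $p\mid\mathscr D\Rightarrow p^{2\cdot 1+1}=p^3\mid\mathscr D$, forcing $b\in\{0\}\cup\{b\geq 3\}$; since $2\nmid n$ it gives $2\mid\mathscr D\Rightarrow 4\mid\mathscr D$, forcing $a\in\{0\}\cup\{a\geq 2\}$. The remaining factor $m$ is then coprime to $2p$.

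For the converse, the strategy is to exhibit a finite list of ``building blocks'' and close under the multiplicativity $M_G(fh)=M_G(f)M_G(h)$. Because the numerical semigroups $\langle 2,3\rangle$ and $\langle 3,4,5\rangle$ are $\{0\}\cup\{a\geq 2\}$ and $\{0\}\cup\{b\geq 3\}$, it suffices to realise the blocks $2^2,\,2^3,\,p^3,\,p^4,\,p^5$ and every $m$ coprime to $2p$. The integer $m$ comes from Theorem \ref{coprime2n}; $2^2$ from Lemma \ref{achieve}; $p^3$ from the remark preceding the theorem; and $p^4$ from $M_{D_{2p}}(\Phi_p(x)+(x-1))=M_{\mathbb Z_p}(\Phi_p+(x-1))^2=p^4$, using the cyclic Laquer--Newman construction viewed as a $y$-free dihedral polynomial.

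The two genuinely new constructions are $2^3$ and $p^5$. For $2^3$ I take $a_0=a_1=a_2=b_0=1$ and the remaining variables zero: then $L_1L_2=4\cdot 2=8$, while each quadratic factor equals $Q(w_p^j)=(w_p^j+w_p^{-j})(w_p^j+w_p^{-j}+2)$, and the identities $\prod_{j=1}^{p-1}(w_p^j+w_p^{-j})=1$ and $\prod_{j=1}^{p-1}|1+w_p^j|^2=1$ (immediate from $\prod_{w^p=1}(w^2+1)=2$ and $\prod_{w^p=1}(1+w)=2$) give $\mathscr D=8$. For $p^5$ with $p\neq 3$ I use the cubic analogue of the $p^3$ construction,
$$F(x,y)=\Big(\tfrac{x^{(p+1)/2}-1}{x-1}+(x-1)^3\Big)+y\Big(\tfrac{x^{(p-1)/2}-1}{x-1}+x^{-1}(x-1)^3\Big).$$
The polynomial identity $s_{(p+1)/2}(x)=1+x\,s_{(p-1)/2}(x)$ together with the observation $|s_{(p+1)/2}(w_p^j)|^2=|s_{(p-1)/2}(w_p^j)|^2$ for $j\neq 0$ (both sides equal $(2-w_p^{jm_+}-w_p^{jm_-})/|w_p^j-1|^2$) collapses $|f(w_p^j)|^2-|g(w_p^j)|^2$ to $(w_p^j-1)^3+(w_p^{-j}-1)^3=-|1-w_p^j|^4(|1-w_p^j|^2-3)$. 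Its product over $j=1,\dots,p-1$ is $p^4\cdot\prod(1+w_p^j+w_p^{-j})=p^4\cdot 1$, where the final factor equals $\Res(\Phi_p,\Phi_3)/\Phi_3(1)=3/3=1$, an identity that fails precisely when $p=3$. Combined with $L_1L_2=p$ at $j=0$, this yields $|\mathscr D|=p^5$. The case $p=3$ is handled directly, e.g.\ $a=(3,0,-1)$, $b=(1,1,-1)$ gives $\mathscr D=3\cdot 9^2=3^5$. The main obstacle is this $p^5$ construction: the twin cancellations in $|f|^2-|g|^2$ and the cyclotomic identity $\prod_{j=1}^{p-1}(1+w_p^j+w_p^{-j})=1$ with the separate treatment of $p=3$.
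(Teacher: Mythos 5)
Your proof is correct, and its skeleton coincides with the paper's: necessity comes from Lemma \ref{div} (with $p^1\|n$ forcing $b=0$ or $b\geq 3$, and the odd-$n$ clause forcing $a=0$ or $a\geq 2$), and sufficiency comes from Theorem \ref{coprime2n} together with explicit building blocks closed under the multiplicative property \eqref{mult}. Your $2^2$ and $2^3$ are literally the paper's $M_G(x+1)$ and $M_G((x^2+x+1)+y)$. The one genuine divergence is how the powers of $p$ are produced. The paper uses a single parametric family,
$$ M_G\Bigl( \tfrac{x^{(p^{\ell}+1)/2}-1}{x-1} + (x-1) + y\bigl( \tfrac{x^{(p^{\ell}-1)/2}-1}{x-1} +x^{-1}(x-1)\bigr)\Bigr) = p^{\ell+2},\qquad \ell\geq 1, $$
which hits every $p^b$ with $b\geq 3$ in one stroke and, evaluated over $D_{2p^k}$, gives $p^{\ell+2k}$ --- which is why the paper's proof handles all $D_{2p^k}$ simultaneously. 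You instead generate the numerical semigroup $\langle 3,4,5\rangle$ from three separate determinants: $p^3$ from the remark at the end of Section \ref{values}, $p^4$ as $M_{\mathbb Z_p}(\Phi_p(x)+(x-1))^2$, and a bespoke cubic $p^5$ construction. I checked the latter: the cancellation $|s_{(p+1)/2}(w)|^2=|s_{(p-1)/2}(w)|^2$ at $w^p=1$, $w\neq 1$, the reduction of $Q$ to $-u^2(u-3)$ with $u=|1-w|^2$, the identity $\prod_{j=1}^{p-1}(1+w_p^j+w_p^{-j})=1$ for $p\neq 3$, and the separate $p=3$ example ($\mathscr{D}=3\cdot 9^2=3^5$) are all correct. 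Both routes are valid; yours costs an extra construction and a special case at $p=3$ and does not extend to $D_{2p^k}$, while the paper's is shorter and more uniform.
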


 For $G=\mathbb Z_{2p}$ with $p$ an odd prime Laquer \cite{Laquer}  showed that the values take the form $2^a p^bm$ 
for any $(m,2p)=1$ and $a=0$ or $a\geq 2$ and $b=0$ or $b\geq 2$. Likewise we can give a complete description for $D_{4p}$.

\begin{theorem} \label{D_4p}
Suppose that $G=D_{4p}$ with $p$ an odd prime.

 The odd values achieved as integer  group determinants are the $m\equiv 1$ mod 4 with $p\nmid m$ or $p^3\mid m$.

The even values are the  integers of  the form
$2^a p^b m$ for any integer $m$ with $(m,2p)=1$,  $a=4$ or $a\geq 6$ and $b=0$ or $b\geq 3$. 

\end{theorem}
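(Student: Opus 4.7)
The statement splits into divisibility restrictions and realisability, which I would handle separately and then patch together via the multiplicative identity \eqref{mult}.

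\emph{Restrictions.} For $G = D_{4p}$ we have $n = 2p$, so $2\|n$ and $p\|n$. Lemma \ref{div} applied with $\alpha = 1$ forces the $p$-adic valuation of any dihedral determinant to lie in $\{0\}\cup\{b\geq 3\}$, and the $2$-adic valuation to lie in $\{0,4\}\cup\{a\geq 6\}$. Since $n$ is even, the last clause of Theorem \ref{coprime2n} gives the congruence $\equiv 1 \pmod 4$ for every odd determinant. Together these exactly carve out the claimed set of possible values.

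\emph{Realisation.} I would build a short list of atomic determinants and combine them via \eqref{mult}. The atoms I need are: (i) every $k \equiv 1 \pmod 4$ with $(k, 2p) = 1$, supplied directly by Theorem \ref{coprime2n}; (ii) the value $p^3$, obtained by adapting to $\mathbb Z[D_{4p}]$ the $M_{D_{2p}}=p^3$ construction recorded in Section \ref{values} and verifying via the factorisation of Section \ref{dihedral}; (iii) the value $2^4$, already furnished by Lemma \ref{achieve} ($\mathscr{D}_G(1,0,1,0,\ldots,0)=2^4$ under $2\|n$); and (iv) enough further pure powers $2^a$ with $a\geq 6$ so that $\{4\}\cup\{a\geq 6\}$ is generated as an additive semigroup. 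Given any target $m = 2^a p^b k$ satisfying the constraints, I would realise it as the product of one atom of each type, choosing the sign of $k$ so that the final product is $\equiv 1 \pmod 4$ (one of $\pm k$ always is).

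\emph{Main obstacle.} Step (iv) is the bottleneck. The natural pair $\{2^4, 2^6\}$ generates only the even exponents $\{4,6,8,10,\ldots\}$, and even adjoining $2^7$ leaves $2^9$ out of reach. I therefore expect to need direct constructions for a small collection of odd exponents, produced from sparse dihedral polynomials of the form $F(x,y) = f(x)+y\,g(x)$ and analysed via the reduction $M_{D_{4p}}(F) = M_{\mathbb Z_{2p}}\!\bigl(f(x)f(x^{-1}) - g(x)g(x^{-1})\bigr)$ from Section \ref{MahlerMeasure}. Choosing $f$ and $g$ so that the four linear factors in the factorisation of Section \ref{dihedral} contribute the required pure $2$-power while each of the $p-1$ quadratic factors $Q(w_{2p}^j)$ evaluates to $\pm 1$ is the chief technical task, and is precisely where cyclotomic identities analogous to those invoked in Section \ref{values} for $D_{2p}$ will do the work.
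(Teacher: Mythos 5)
Your decomposition into restrictions (Lemma \ref{div} with $2\|n$, $p\|n$, plus the mod $4$ condition from Theorem \ref{coprime2n}) and realisation by multiplying atomic determinants via \eqref{mult} is exactly the paper's architecture, and the restriction half is complete and correct. The realisation half, however, has gaps beyond the one you flag. First, the $p$-power atoms: you propose only $p^3$, but the theorem requires every exponent $b\geq 3$, and products of $p^3$'s reach only $b\in 3\mathbb Z$; nothing else in your atom list carries positive $p$-adic valuation, so $b=4,5,7,8,\dots$ are unreachable. The paper instead constructs, for every $k\geq 1$, a determinant equal to $\delta p^{k+2}$ with $\delta=\pm1$ chosen so that $\delta p^{k}\equiv 1\pmod 4$, taking $f(x)=(x^A-1)/(x-1)$ and $g(x)=(x^p+1)(x^B-1)/(x-1)$ with $A=\tfrac12(p^k+\delta)$, $B=\tfrac14(p^k-\delta)$; the factor $x^p+1$ kills $g$ at the $p$-th roots of unity and makes the resultant computations clean. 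Second, signs: by Theorem \ref{coprime2n}, when $n$ is even only the one of $\pm k$ that is $\equiv 1\pmod 4$ is ever a determinant, so your plan to ``choose the sign of $k$'' is not available; the overall sign must be controlled through the $2$- and $p$-power atoms. With only $+2^4$ from Lemma \ref{achieve} and positive higher powers, a legitimate target such as $-2^4$ cannot be produced, which is why the paper exhibits both $\pm2^4$ and $\pm2^6$ explicitly.

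Third, the obstacle you correctly identify --- the odd exponents $a\geq 7$ --- is not a side issue but the main technical content of the paper's proof, and you have named it without resolving it. The paper's resolution is a single uniform family giving $\pm 2^{l+6}$ for every $l\geq 0$: choose $m\in\{1,3\}$ with $mp\pm 2^l\equiv 1\pmod 4$, write $mp\pm2^l=2t+1$ with $t$ even, and set $f(x)=\bigl(1+x^2+x(1+x^p)\bigr)\frac{x^{t+1}-1}{x-1}-m\frac{x^{2p}-1}{x-1}$, with $g$ the same expression with $t$ in place of $t+1$; then $H(x)=f(x)f(x^{-1})-g(x)g(x^{-1})$ has $H(1)=\pm2^{l+4}$, $H(-1)=2^2$, and resultant $1$ against $\Phi_p$ and $\Phi_{2p}$, so $M_G(F)=\pm2^{l+6}$. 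This is precisely the sparse-polynomial computation you anticipate, but until it (or an equivalent finite atom set such as $\pm2^4,\pm2^6,\pm2^7,\pm2^9$ together with $\delta p^3,\delta p^4,\delta p^5$) is actually carried out, your argument establishes only that every determinant lies in the claimed set, not that every element of that set is attained.
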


For $G=\mathbb Z_{2^k}$  the odd integers are all  determinants. The  even determinant values are $\mathscr{E}=4\mathbb Z$, $16\mathbb Z$ and $32\mathbb Z$ when $k=1,2$ or 3, but  only upper and lower set inclusions $2^{2k-1}\mathbb Z \subseteq \mathscr{E}\subseteq 2^{k+2}\mathbb Z$ are known  for $k\geq 4$, see Kaiblinger \cite[Theorem 1.1]{Norbert2}. Similarly for the dihedral groups $D_{2^k}$ we give a complete description of the determinants  for $k\leq 4,$ and upper and lower containments for $k\geq 5$.
\begin{theorem} \label{D_8}
 For $G=\mathbb Z_2 \times \mathbb Z_2$ the group determinant with integer variables are
$$ 4m+1 \;\;\; \hbox{ and }\;\;\; 2^4(2m+1) \;\;\; \hbox{ and } \;\;\; 2^6 m,\;\;\; m\in \mathbb Z. $$

For  $G=D_8$ the  determinant takes the values $4m+1$ and $2^8 m$, $m$ in $\mathbb Z$.

For  $G=D_{16}$ the  determinant takes the values $4m+1$ and $2^{10} m$, $m$ in $\mathbb Z$.

For $G=D_{2^k}$ with $k\geq 4$ the odd values achieved are the integers 1 mod 4.  The set $\mathscr{E}$ of even values achieved satisfies
$$ 2^{3k}\mathbb Z \subseteq \mathscr{E} \subseteq 2^{2k+2}\mathbb Z,$$
and contains  values with $2^{2k+2} || M_G(F)$.

\end{theorem}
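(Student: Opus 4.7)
The plan is to split into odd and even values and combine Theorem 4.3 for the odd part, Lemma 4.2 for the 2-adic upper bound on even values, and Lemma 4.1 with multiplicativity of $M_G$ plus auxiliary constructions for the lower bound.

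For the odd values of each dihedral $G = D_{2^k}$ with $k \geq 2$, apply Theorem 4.3 with $n = 2^{k-1}$ even: every odd integer $m$ is coprime to $2n = 2^k$, so either $m$ or $-m$ (the representative $\equiv 1 \pmod 4$) is achievable, and all achievable odd values are $\equiv 1 \pmod 4$. This identifies the odd part with $\{4m+1 : m \in \mathbb Z\}$. For $G = \mathbb Z_2 \times \mathbb Z_2$ the same conclusion follows from the Dedekind factorisation \eqref{abelianfactor} into four linear forms $L_{\epsilon\delta}$ which share parity by a Hadamard constraint, a direct $\pmod 4$ computation in the all-odd case, and explicit parametric constructions for the reverse inclusion.

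For the upper bound on even values, apply Lemma 4.2 with $n = 2^{k-1}$ and $\alpha = k-1$: for $k \geq 3$ so $\alpha \geq 2$, $2^{2\alpha + 4} = 2^{2k+2}$ divides every even $\mathscr D$, yielding $\mathscr{E} \subseteq 2^{2k+2}\mathbb Z$, which specialises to $\mathscr{E} \subseteq 2^8\mathbb Z$ for $D_8$ and $\mathscr{E} \subseteq 2^{10}\mathbb Z$ for $D_{16}$. For $\mathbb Z_2 \times \mathbb Z_2$, subdivide the all-even parity case by writing $L_{\epsilon\delta} = 2u_{\epsilon\delta}$ and counting odd $u_{\epsilon\delta}$; the Hadamard constraint $\sum u_{\epsilon\delta} \in 2\mathbb Z$ forces the count to be $0, 2,$ or $4$, yielding values in $2^8 \mathbb Z$, $2^6 \mathbb Z$, and $2^4(2m+1)$ respectively, so the combined even set is $2^4(2m+1) \cup 2^6\mathbb Z$.

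For the lower bound, Lemma 4.1 gives one base polynomial per group with $|\mathscr D| = 2^{2^k}$; for $k \geq 4$ this lies in $2^{3k}\mathbb Z$ since $2^k \geq 3k$. Multiplying by the $4m+1$ odd achievables via multiplicativity of $M_G$ realises $2^{2^k} \cdot (4\mathbb Z+1) \subseteq \mathscr{E}$. To fill every multiple of $2^{3k}$ (or of $2^8, 2^{10}$ in the exact cases), construct additional polynomials of the form $F(x,y) = f(x) + y g(x)$ with $M_G(F) = M_{\mathbb Z_{2^{k-1}}}(f(x)f(x^{-1}) - g(x)g(x^{-1}))$; in particular exhibit one with $2^{2k+2} || M_G(F)$, witnessing the final clause of the theorem, together with enough intermediate-valuation polynomials so that products with $4\mathbb Z+1$ cover every remaining residue.

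The principal obstacle is this last construction step: Lemma 4.1 yields only the single large valuation $2^k$, whereas the theorem requires polynomials attaining 2-adic valuation exactly $2k+2$ and enough other valuations to generate $2^{3k}\mathbb Z$ via multiplication by the $4\mathbb Z+1$-odd achievables. This requires a careful analysis of when the reciprocal cyclic measure $M_{\mathbb Z_{2^{k-1}}}(f(x)f(x^{-1}) - g(x)g(x^{-1}))$ drops in 2-adic valuation below Lemma 4.1's crude estimate, which is finer than the tools developed so far and occupies the bulk of the case analysis.
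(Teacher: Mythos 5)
Your skeleton is the right one for the odd values and for the upper bounds: the odd part does come straight from Theorem \ref{coprime2n} with $n=2^{k-1}$ even, and the containment $\mathscr{E}\subseteq 2^{2k+2}\mathbb Z$ (resp.\ the $2^4||\mathscr D$ or $2^6\mid\mathscr D$ dichotomy for $\mathbb Z_2\times\mathbb Z_2$) is exactly Lemma \ref{div} with $\alpha=k-1$. Your parity-counting argument for $\mathbb Z_2\times\mathbb Z_2$ via the four linear forms is a sound, if slightly different, route to the same place. But the lower-bound half of the theorem is where the content lies, and there your proposal has a genuine gap that you yourself flag as "the principal obstacle": multiplying the single value $2^{2^k}$ from Lemma \ref{achieve} by odd achievables can only produce even determinants whose $2$-adic valuation is a multiple of $2^k$, so it cannot generate $2^{3k}\mathbb Z$, let alone witness $2^{2k+2}||M_G(F)$. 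You have correctly diagnosed that new constructions are needed, but you have not supplied them, and without them none of the claimed equalities ($\mathscr E = 2^6\mathbb Z\cup 2^4(2\mathbb Z+1)$ for $\mathbb Z_2\times\mathbb Z_2$, $\mathscr E=2^8\mathbb Z$ for $D_8$, $\mathscr E=2^{10}\mathbb Z$ for $D_{16}$, $2^{3k}\mathbb Z\subseteq\mathscr E$ in general) is proved.

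The missing idea is that one should not build the even values multiplicatively from a fixed power of $2$, but rather exhibit a single \emph{parametric} family whose measure is linear in a free integer $m$. The paper takes $f(x)=2+m\bigl(\tfrac{x^{2^{k-1}}-1}{x-1}\bigr)$ and $g(x)=(x+1)-m\bigl(\tfrac{x^{2^{k-1}}-1}{x-1}\bigr)$; then $H(x)=f(x)f(x^{-1})-g(x)g(x^{-1})$ has $H(1)=2^{k+2}m$ while $H(x)=(1-x)(1-x^{-1})$ at the nontrivial $2^{k-1}$-th roots of unity, giving $M_G(f+yg)=2^{3k}m$ for \emph{every} $m$ and hence all of $2^{3k}\mathbb Z$ at once. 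The clause about $2^{2k+2}||M_G(F)$ is witnessed by the single polynomial $2+(1-x)$, and the exact descriptions for $D_8$ and $D_{16}$ require four further explicit polynomials each (producing $\pm 2^8(4k+1)$ for $D_8$, and $\pm2^{10}$, $\pm2^{11}$ for $D_{16}$) to fill the gap between $2^{3k}\mathbb Z$ and $2^{2k+2}\mathbb Z$; similarly the $2^4(2m+1)$ and $2^6m$ values for $\mathbb Z_2\times\mathbb Z_2$ come from explicit degree-one parametric choices of $f$ and $g$. None of this requires the "careful analysis of when the valuation drops" that you anticipate; it requires guessing the right $f$ and $g$ and evaluating $H$ at roots of unity, which is elementary but is precisely the step your proposal leaves open.
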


For $G=\mathbb Z_{p^k}$ with $p$ odd and $k\geq 2$ the values coprime to $p$ are all achievable while
the set of values that are multiples of $p$ were shown by Newman \cite{Newman2} to satisfy
$$ p^{2k}\mathbb Z \subsetneq \mathscr{P} \subseteq p^{k+1}\mathbb Z $$
with $\mathscr{P}=27\mathbb Z$ when  $k=2$  and $p=3$ but  $\mathscr{P}\neq p^{k+1}\mathbb Z$  for $p\geq 5$.

Similarly for $G=D_{2p^k}$ we can obtain a complete description for a few primes when $k=2$, and in general upper and lower set inclusions.

\begin{theorem}\label{D_18} For $G=D_{2p^2}$ with $p=3,5$ or 7 the measures take the form $2^a p^b m$, $(m,2p)=1$ with $a=0$ or $a\geq 2$ and $b=0$ or $b\geq 5$.

 In general,  if $G=D_{2p^k}$ with $p$ an odd prime and $k\geq 2$ the measures must take the form
$2^ap^bm$, $(m,2p)=1,$ with $a=0$ or $a\geq 2$ and $b=0$ or $b\geq 2k+1$. We can achieve everything of this form with $b=0$ or $b\geq 3k$. There are measures with $b=2k+1$.

\end{theorem}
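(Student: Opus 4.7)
The necessary form of the measures is immediate from Lemma \ref{div}: since $n = p^k$ is odd the lemma gives $4\mid M_G(F)$ whenever $2\mid M_G(F)$, forcing $a \in \{0\} \cup \{a \geq 2\}$; and since $p^k\|n$ the lemma gives $p^{2k+1}\mid M_G(F)$ whenever $p\mid M_G(F)$, forcing $b \in \{0\} \cup \{b \geq 2k+1\}$. This settles the necessity halves of both the general statement and the $p\in\{3,5,7\}$ case. For achievability, I would invoke Theorem \ref{coprime2n} to realise every $m$ with $(m,2p)=1$, and then use the multiplicativity $M_G(FH) = M_G(F)M_G(H)$, with the dihedral relation $xy = yx^{n-1}$ governing the product, to reduce the problem to exhibiting the pure prime-power measures $2^a$ and $p^b$.

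For the factor $2^a$, Lemma \ref{achieve} supplies the value $4$ via $F = 1+x$; to reach every $a \geq 2$ (and not only even exponents) I would supplement this with a short list of $f+yg$ polynomials, computed via $M_{D_{2n}}(f+yg) = M_{\mathbb Z_n}(f(x)f(x^{-1}) - g(x)g(x^{-1}))$, that produce $2^3$ and $2^5$, so that multiplicativity fills in the remaining exponents.

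The bulk of the proof concerns the $p$-part. Generalising the $D_{2p}$ identity
\[ M_{D_{2p}}\!\left(\tfrac{x^{(p+1)/2}-1}{x-1} + (x-1) + y\bigl(\tfrac{x^{(p-1)/2}-1}{x-1} + x^{-1}(x-1)\bigr)\right) = p^3 \]
recalled at the end of Section \ref{values}, I would substitute $\Phi_{p^k}$ for $\Phi_p$ and calibrate the power of $(x-1)$ so that the resulting cyclic measure has exact $p$-adic valuation $3k$, and separately $2k+1$ (for the existence claim about $b=2k+1$). Pairing the $3k$-construction with a parallel one of valuation $3k+1$ then covers all $b \geq 3k$ by multiplicativity, since $\gcd(3k,3k+1)=1$ implies the semigroup they generate is $\{b : b \geq 3k\}$. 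For $p \in \{3,5,7\}$ and $k=2$ the only remaining gap is $b = 5$, for which I would parametrise the $b=2k+1$ construction so that the factor coprime to $p$ in its measure can be made $\pm 1$, combining the squaring identity $M_{D_{2p^2}}(f(x)) = M_{\mathbb Z_{p^2}}(f)^2$ with Newman's classification \cite{Newman2} of the $\mathbb Z_{p^2}$-achievable multiples of $p$, which for $p \leq 7$ is rich enough to supply the additional flexibility required.

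The hardest step is calibrating the polynomials so that the $p$-adic valuation of the resulting cyclic measure is exactly the target rather than overshooting: the default behaviour of products of $\Phi_{p^k}(x)$ with low powers of $(x-1)$ is to yield a larger valuation, and each factor must be tuned using the Newton polygon at a primitive $p^k$-th root of unity in $\mathbb Z[\zeta_{p^k}]$. A secondary difficulty is that covering \emph{every} exponent $b \geq 3k$ in general, and every $b \geq 5$ for the small primes, requires a family of constructions rather than a finite generating set; the restriction $p \leq 7$ in the first part of the theorem is precisely where Newman's detailed description of $\mathbb Z_{p^2}$-measures is available to close the remaining small gaps.
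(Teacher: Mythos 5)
Your handling of the necessity (via Lemma \ref{div}), of the values coprime to $2p$ (via Theorem \ref{coprime2n}), and of the powers of $2$ (the values $2^2$ and $2^3$ generate every exponent $a\geq 2$ under multiplicativity) agrees with the paper. The $p$-part, however, has two genuine gaps. First, the numerical semigroup generated by $3k$ and $3k+1$ is \emph{not} $\{b:b\geq 3k\}$: already $3k+2$ is unrepresentable, since any nonnegative combination using at least two generators is $\geq 6k>3k+2$ for $k\geq 2$. So two calibrated constructions with coprime exponents cannot cover all $b\geq 3k$; one needs a one-parameter family hitting each exponent individually. That is exactly what the paper does: for every $\ell\geq k$ it exhibits an explicit $F=f+yg$ with $M_G(F)=M_{\mathbb Z_{p^k}}\bigl(x^{-(p^\ell-1)/2}\tfrac{x^{p^\ell}-1}{x-1}+x^{-1}(x-1)^2\bigr)=p^{\ell+2k}$, so every $b\geq 3k$ is realized directly. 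You acknowledge at the end that a family is needed, but the argument you actually give rests on the false semigroup claim.

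Second, your route to the exponent $b=5$ for $p=3,5,7$ cannot work as described: the squaring identity $M_{D_{2n}}(f(x))=M_{\mathbb Z_n}(f)^2$ produces only \emph{even} $p$-adic valuations, so no combination of it with Newman's description of $\mathbb Z_{p^2}$-determinants can yield the odd exponent $5$; moreover Newman's classification of the multiples of $p$ is only complete for $p=3$ when $k=2$ (for $p\geq 5$ he shows $\mathscr{P}\neq p^{k+1}\mathbb Z$ without determining $\mathscr{P}$), so it cannot supply the needed flexibility for $p=5,7$ in any case. An intrinsically dihedral polynomial with $g\neq 0$ is required, and the paper writes one down, namely $\frac{x^{(p+1)/2}-1}{x-1}+y\bigl(\frac{x^{(p+1)/2}-1}{x-1}-x^{p-1}\bigr)$, whose measure is computed to be exactly $p^5$ for $p=3,5,7$; the restriction to these three primes in the first sentence of the theorem comes precisely from this computation, not from Newman's theorem. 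Similarly, your claim that the exponent $b=2k+1$ can be reached by ``calibrating the power of $(x-1)$'' is asserted rather than carried out; the paper instead points to the concrete witness $pA+B(x-1)$ with $p\nmid AB$. In short, the skeleton of your argument is right, but the two steps that actually carry the content of the theorem --- covering every $b\geq 3k$ and producing $p^5$ --- are not established by the methods you propose.
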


\section{Proofs}

Recall that
$$   x^n-1 =\prod_{m\mid n} \Phi_m(x),\;\;\;\;    \Phi_m(x):=\prod_{\stackrel{j=1}{\gcd(j,m)=1}}^{m} (x- e^{2\pi i j/m}),$$
where $\Phi_m(x)$ is  the $m$th cyclotomic polynomial,
an  irreducible polynomial in $\mathbb Z[x]$ whose roots are the primitive $m$th roots of unity.
We write $\text{Res}(F,G)$ for the resultant of $F$ and $G$. We shall need
the resultant of two cyclotomic polynomials, see for example  Apostol \cite{Apostol}, though this dates  back to  E. Lehmer \cite{ELehmer}.
For $m>n$
$$ \left|\text{Res}\left(\Phi_n,\Phi_m\right)\right| = \begin{cases} p^{\phi (n)}, &
\text{if $m=np^{\alpha},$} \\ 1, & \text{ else.} \end{cases}$$

\begin{proof}[Proof of Lemma \ref{achieve}] We begin with powers of  2.
If we set $y_1=\cdots =y_n=0$ then 
\begin{align*}  M_{D_{2n}}(a_0 +a_1 x+\cdots a_{n-1}x^{n-1})  & = M_{\mathbb Z_{n}}(a_0 +a_1 x+\cdots a_{n-1}x^{n-1})^2 \\
 & = |\text{Res}(a_0 +a_1 x+\cdots a_{n-1}x^{n-1},x^n-1)|^2. 
\end{align*}
Observe that if $2\nmid n$ we have $|\text{Res}(x+1,x^n-1)|=|\text{Res}(x+1,x-1)|=2$, and if $2^{\alpha}|| n$
we have 
$$ |\text{Res}(x^{2^{\alpha}}+1,x^n-1)|=\prod_{j=0}^{\alpha}|\text{Res}( \Phi_{2^{\alpha+1}},\Phi_{2^j})|=\prod_{j=0}^{\alpha} 2^{\phi (2^j)} =2^{2^{\alpha}}.$$
Taking these polynomials and reducing mod $x^n$ as necessary we get the squares of  $2$ if $2\nmid n$ and $2^{2^\alpha}$
if $2^{\alpha}||n$. For odd $p$ we could similarly get $p^{2p^{\alpha}}$ when $p^{\alpha}|| n$.

Instead suppose that $m=2t+1$ with $(m,n)=1$ and consider the polynomial
$$ F(x,y)= \sum_{i=0}^t x^i + \sum_{i=0}^{t-1} yx^i = f(x) + yg(x),  \;\; f(x)= \frac{x^{t+1}-1}{x-1},\;\; g(x)=\frac{x^t-1}{x-1}$$
where we reduce $F$  mod $x^n-1$ to make it of the form \eqref{dipoly}  if $t\geq n$.
 Then
$$ M_G(F(x,y))=M_{\mathbb Z_n}\left( f(x)f(x^{-1})-g(x)g(x^{-1}) \right) = M_{\mathbb Z_n}\left( x^{-t} \left(\frac{x^m-1}{x-1}\right) \right).$$
Since $(n,m)=1$ the squared quadratic terms assemble to give
$$  \prod_{k\mid n, k\neq 1,2}\prod_{d\mid m, d\neq 1} |\text{Res}(\Phi_d, \Phi_k)|=1. $$
For $n$ odd the two linear terms give us $f(1)^2-g(1)^2=m$ and $M_G(F)=m$. For $n$ even  we also have $f(-1)^2-g(-1)^2 = (-1)^t$ and $M_G(F)=m$ if $m\equiv 1$ mod 4 and 
$M_G(F)=-m$ if $m\equiv 3$ mod 4.

If $p=2t+1$ and $p^{\alpha}||n$ we consider
$$ F= \sum_{i=0}^t x^{ip^{\alpha}} + \sum_{i=0}^{t-1} yx^{i p^{\alpha}}=f(x^{p^{\alpha}})+yg(x^{p^{\alpha}}), $$
reducing mod $x^n-1$ if necessary. We similarly obtain 
$$ M_G(F) =  M_{\mathbb Z_n}\left( x^{-tp^{\alpha}} \Phi_{p^{\alpha +1}}\right), $$
where $|\text{Res}(\Phi_d,\Phi_{p^{\alpha+1}})|=1$ for the $d\mid n$, except for the $d=p^j$ which contribute
$$|M_G(F)|=  |\text{Res}(x-1,\Phi_{p^{\alpha+1}})|\prod_{j=1}^{\alpha} |\text{Res}(\Phi_p^j,\Phi_{p^{\alpha+1}})|=p\prod_{j=1}^{\alpha} p^{\phi(p^j)}= p^{p^{\alpha}}. $$

\end{proof}

\begin{proof}[Proof of Theorem \ref{coprime2n}] From the proof of Lemma \ref{achieve} we can obtain 
any $(m,2n)=1$ when $n$ is odd and those with $m\equiv 1$ mod 4 when $n$ is even. Note switching the roles of $f(x)$ and $g(x)$ switches the sign of $M_G(f(x)+yg(x))$ when $n$ is odd but not for $n$ even. So it just remains to show 
the odd $D_{2n}$  measure of an $F(x,y)=f(x)+yg(x)$  must be $1$ mod 4 when $n$ is even. The square terms will of course be 1 mod 4, so that  leaves the contribution from the four  linear terms  $f(1)^2-g(1)^2$ and $f(-1)^2-g(-1)^2$. Since $f(-1)\equiv f(1)$ mod $2$ we have $f(-1)^2\equiv f(1)^2$ mod 4 and $((f(1)^2-g(1)^2)(f(-1)^2-g(-1)^2)\equiv (f(1)^2-g(1)^2)^2\equiv 1$ mod 4. 
\end{proof}

\begin{proof}[Proof of Lemma \ref{div}] We write $\mathscr{D}:=\mathscr{D}_G(a_0,\ldots ,a_{n-1},b_0,\ldots ,b_{n-1})$.

Suppose  first that $p$ is odd with $p^{\alpha}||n$.  If $\alpha=0$ then there is nothing to show, so assume that $\alpha\geq 1$.  If $n=mp^{\alpha}$ then, writing $sm+tp^{\alpha}=1$ and observing that $w_n=w_{p^{\alpha}}^sw_m^t$, and that $Q(w)=Q(w^{-1}),$ we have
$$ \prod_{j=1}^{[(n-1)/2]} Q(w_n^j)^2= \prod_{j=0}^{p^{\alpha}-1} \prod_{\substack{u=0\\w_m^uw_{p^{\alpha}}^j\neq\pm 1} }^{m-1} Q(w_m^u w_{p^{\alpha}}^j) =A_1^ 2\prod_{\substack{d\mid m\\d\neq 1,2} }A_d^2, $$
where, splitting into the order of the $p^{\alpha}$th and $m$th roots of unity, and re-pairing the $d$th roots with their conjugates, we get,
$$ A_1:= \prod_{v=1}^{\alpha} A_1(v),\;\; A_d:= A_d(0)\prod_{v=1}^{\alpha} A_d(v), $$
where
$$  A_1(v):=\prod_{\substack{j=1\\p\nmid j}}^{(p^v-1)/2} Q(w_{p^v}^j) \text{  if $m$ is  odd }, \;\;\;  A_1(v):=\prod_{\substack{j=1\\p\nmid j}}^{(p^v-1)/2} Q(w_{p^v}^j) Q(-w_{p^v}^j)\text{ if  $m$ is even }, $$
and
$$ A_d(0):=\prod_{\substack{t=1\\(t,d)=1}}^{d/2} Q(w_d^t),\;\; A_d(v):=\prod_{\substack{j=1\\p\nmid j}}^{p^v-1}\prod_{\substack{t=1\\(t,d)=1}}^{d/2} Q(w_d^tw_{p^v}^j). $$
Note, since $Q(w)=Q(w^{-1}),$ our products are over  complete sets of conjugates and the $A_j(i)$ are all integers.
Notice also that our linear factors equal
$$ A_0 :=\begin{cases} Q(1), & \text{ if $m$ is odd,} \\ Q(1)Q(-1), & \text{ if $m$ is even,}\end{cases}$$ 
and
$$ \mathscr{D}=A_0 A_1^2 \prod_{\substack{d\mid m\\d\neq 1,2}} A_d^2.$$ 

Setting $\pi_{\alpha}=1-w_{p^{\alpha}}$, observe  that the $p$-adic absolute value on $\mathbb Q$ extended to $\mathbb Q(w_n)$ 
has $|\pi_{\alpha}|_p= p^{-1/\phi(p^{\alpha})} < 1$. Since all the $w_{p^{v}}^j\equiv 1$ mod $\pi_{\alpha}$ in $\mathbb Z [w_n]$, we gain the following congruences mod $\pi_{\alpha}$ in $\mathbb Z [w_n]$ and, since all the terms are integers, mod $p$ in $\mathbb Z$:
$$ A_d(v)\equiv A_d(0)^{\phi(p^v)} \text{ mod } p,\;\;\; A_1(v) \equiv A_0^{\phi (p^v)/2} \text{ mod } p.$$
Hence if 
$p\mid \mathscr{D}$ 
then either we have $p\mid A_d(v)$ for some $d$ and $0\leq v\leq \alpha, $ and hence $p\mid A_d(v)$ all $0\leq v\leq \alpha$,  giving  $p^{\alpha+1}\mid A_d$ and $p^{2\alpha+2}\mid  \mathscr{D}$, or $p\mid A_0$ or $p\mid A_1(v)$ for some $1\leq v \leq \alpha$ and $p$ divides all of these, giving $p^{\alpha}\mid A_1$, $p\mid A_0$  and $p^{2\alpha+1} \mid \mathscr{D}. $

Suppose  first that $2\nmid n$. Notice that the two  linear factors are congruent 
mod 2 so that their product is either odd or a multiple of 4, while the integer from the quadratic factors is squared. Thus $2\mid  \mathscr{D}$ implies that $2^2\mid \mathscr{D}$.

So suppose that $2^{\alpha}||n$ with $\alpha \geq 1$ and $2\mid \mathscr{D}$. Similar to the $p$ odd case
we write $n=2^{\alpha}m$ and end up with 
$$ \mathscr{D}=Q(1)Q(-1) A_1^2 \prod_{\substack{d\mid m\\d\neq 1}}A_d^2, $$
with $A_d$ as before and
$$ A_1=\prod_{v=2}^{\alpha} A_1(v),\;\;\; A_1(v):=\prod_{\substack{j=1\\ j \text{ odd}}}^{2^{v-1}} Q(w_{2^v}^j). $$
Again from 2-adic considerations we have 
$$ A_1(v)\equiv Q(1)^{2^{v-2}} \text{ mod } 2,\;\;\;\; \;\; A_d(v) \equiv A_d(0) \text{ mod } 2. $$
Hence if $2\mid A_d(v)$ for some $d$ and $0\leq v\leq \alpha$ then 2 divides them all and $2^{\alpha+1}\mid A_d$ and $2^{2\alpha+2}\mid \mathscr{D}$. Likewise, if $2\mid Q(1),Q(-1)$ or $A_1(v)$ for some $2\leq v\leq \alpha$
then 2 divides all of them, so  $2^{\alpha -1}\mid A_1$ and, since the four linear factors are all congruent mod 2, also $2^4\mid Q(1)Q(-1)$, giving $2^{2\alpha+2}\mid \mathscr{D}$. Moreover, writing $Q(1)=f(1)^2-g(1)^2$
and $Q(-1)=f(-1)^2-g(-1)^2$, if $Q(1)$ is even then $f(1),g(1),f(-1),g(-1)$ all have the same parity. If all are odd then $2^3\mid Q(1),Q(-1)$ and $2^6\mid Q(1)Q(-1)$. If all are even then $(f(1)/2)^2- (g(1)/2)^2$ 
and $(f(-1)/2)^2- (g(-1)/2)^2$ are either
odd or $0$ mod 4 and $2^4||Q(1)Q(-1)$ or $2^6|Q(1)Q(-1)$.

For $\alpha \geq 2$  we can extract  two further 2's. If  $2^2 \mid A_d(v)$ for some $0\leq v \leq  \alpha$ or $2^2\mid A_1(v)$ for some $2\leq v\leq \alpha$ then, since these terms are squared, we gain two extra 2's. 
This leaves us to consider the cases $2 || A_d(2)$ or $2 || A_1(2)$.

Suppose first that $2 || A_d(2).$  Suppose that $f(x)=\sum_{j=0}^{n-1}a_jx^j$, $g(x)=\sum_{j=0}^{n-1}b_jx^j$  and write
$$ H(x) = \prod_{\substack{j=1\\(j,d)=1}}^{d}Q(xw_d^j),\;\;\; Q(x)=f(x)f(x^{-1})-g(x)g(x^{-1}). $$
 Since we run over a full set of conjugates $H(x)\in \mathbb Z [x,x^{-1}]$. Moreover $H(x^{-1})=H(x)$ 
and, writing $x^j+x^{-j}$ as a  polynomial in $x+x^{-1}, $ we have $H(x)\in \mathbb Z [ x+x^{-1}]$. 
$$ H(x) = \sum_{j=0}^J  B_j (x+x^{-1})^j. $$
Note that
\begin{align*}   A_d(0)^2 & = H(1)\equiv B_0+2B_1 +4B_2 \text{ mod 8}, \\
 A_d(1)^2  &=H(-1)\equiv B_0-2B_1+4B_2 \text{ mod } 8,\\
 A_d(2) & =H(i)=B_0. 
\end{align*}
Hence if $2|| A_d(2)$ we have 
$$A_d(0)^2+A_d(1)^2 \equiv 2B_0\equiv 4 \text{ mod } 8, $$
ruling out  having $2 || A_d(0)$ and  $A_d(1)$. Thus $2^2\mid A_d(0)$ or $A_d(1)$ and $2^4\mid A_d(0)A_d(1)A_d(2)$ and $2^{\alpha+2}\mid A_d$ and $2^{2\alpha+4}\mid \mathscr{D}.$

Similarly if $2 || A_1(2)$  we have
$$ Q(1)Q(-1) A_1(2)^2 = Q(1)Q(-1)Q(i)^2$$
and by the same argument, with $Q(x)$ in place of $H(x),$ we have
$$ Q(1)+Q(-1)\equiv 2Q(i) \text{ mod } 8 $$ 
with $4\mid Q(\pm 1)$, and if $2 || Q(i)$ we can't have $2^2||Q(\pm 1)$. That is, for either  $\epsilon=1$ or $-1$ we must have $8 \mid Q(\epsilon)=f(\epsilon)^2-g(\epsilon)^2$  and $2^2 || Q(-\epsilon)$. Note $f(\epsilon)$ and $g(\epsilon)$ must be the same parity and can't both be odd, else $f(-\epsilon)$ and $g(-\epsilon)$ would both be odd and $8\mid Q(-\epsilon)$. Hence we can write $f(\epsilon)=2\alpha_0$ and $g(\epsilon)=2\beta_0$ and $Q(\epsilon)=4(\alpha_0^2-\beta_0^2)$. Since $2\mid \alpha_0^2-\beta_0^2$ we must have $4\mid \alpha_0^2-\beta_0^2$ and $2^8\mid Q(1)Q(-1)Q(i)^2,$ and again  $2^{2\alpha+4}\mid \mathscr{D}.$

\end{proof}

\begin{proof}[Proof of Theorem \ref{main}]

For $n$ of each given form Lemma \ref{achieve} says that we can achieve the value claimed as $\lambda(D_{2n}),$ while Lemma \ref{div} rules out 
anything smaller.

\end{proof}

\begin{proof}[Proof of Theorem \ref{D_2p}]
Suppose that $G=D_{2p^k}$ with $p$ an odd  prime and $k\geq 1$.
 From Theorem \ref{coprime2n}  we can obtain all values coprime to $2p$. Products of
\begin{align*}
M_G( x+1)  & =M_{\mathbb Z_{p^k}}(x+1)^2=2^2, \\
M_G( (x^2+x+1) +y)  & =M_{\mathbb Z_{p^k}}\left( x^{-2}(x+1)^2(x^2+1)\right)=2^3, 
\end{align*}
give any $2^a$  with $a\geq 2$, and for $\ell\geq k$
\begin{align*}
& M_G\left(  \frac{x^{\frac{(p^{\ell}+1)}{2}}-1}{x-1} + (x-1) + y \left(  \frac{x^{\frac{(p^{\ell}-1)}{2}}-1}{x-1} +x^{-1} (x-1) \right)\right)   \\
 & = M_{\mathbb Z_{p^k}} \left( x^{-\left(\frac{p^{\ell}-1}{2}\right)}\left(\frac{x^{p^{\ell}}-1}{x-1}\right) +x^{-1}(x-1)^2\right) = p^{\ell+2k}
\end{align*}
gives any power $p^b$ with $b\geq 3k$. Products of these achieve anything of the form $2^a p^b m$ with $(m,2b)=1$, $a=0$ or $a\geq 2$ and $b=0$ or $b\geq 3k$.  Lemma \ref{div} shows that 
measures must be of this form with $a=0$ or $a\geq 2$, and  $b=0$ or $b\geq 2k+1$. For $k=1$ these coincide and we have a complete description of the measures.

\end{proof}

\begin{proof}[Proof of Theorem \ref{D_4p}]
 From  Lemma \ref{div} and Theorem \ref{coprime2n}  we know that the values must be of the stated  form and that we can obtain all values coprime to $2p$ that are $1$ mod 4. To deal with the powers of $p$ we show that for 
any $k\geq 1$ there is an $F(x,y)=f(x)+yg(x)$ with
$$ M_G(F)=\delta p^{k+2}, \;\;\; \delta :=\begin{cases} 1, & \text{ if $p^k\equiv 1$ mod 4,}\\
-1, & \text{ if $p^k\equiv -1$ mod 4.}\end{cases}$$
Taking
$$ A=  \frac{1}{2}(p^k+\delta),\;\; B=\frac{1}{4}(p^k-\delta), \;\;\; f(x)=\frac{x^A-1}{x-1},\;\; g(x)=(x^p+1)\left(\frac{x^B-1}{x-1}\right), $$
we have $M_G(F)=M_{\mathbb Z_{2p}}(H(x))$ where
$ H(x)=f(x)f(x^{-1})-g(x)g(x^{-1}). $
Plainly 
$$H(1)=A^2-(2B)^2=\delta p^k. $$
If $x^{p}=-1$ we have $H(x)=x^{-(A-1)}\left( (x^A-1)/(x-1)\right)^2$ and, since $A$ is odd and coprime to $p$, we have  
$$H(-1)=1,\;\;\; |\Res(H(x),\Phi_{2p})|=1. $$

If $x^{p}=1$, $x\neq 1$, we have $H(x)=K(x)/(x-1)(x^{-1}-1)$ where
\begin{align*} K(x^4)  & =(x^{4A}-1)(x^{-4A}-1)-4(x^{4B}-1)(x^{-4B}-1) \\
 & =(x^{2\delta}-1)(x^{-2\delta}-1)-4(x^{-\delta}-1)(x^{\delta}-1) \text{ mod } (x^p-1) \\  & =-(x^{\delta}-1)^2(x^{-\delta}-1)^2. 
\end{align*}
Since $\text{Res}(K(x),\Phi_p)= \text{Res}(K(x^4),\Phi_p)$ and $\text{Res}(x-1,\Phi_p)=p$ we see
that 
$\text{Res}(H(x),\Phi_p)=p^2$  and $M_G(F)=\delta p^{k+2}$.

To obtain the necessary  powers of $2$ we have 
\begin{align*}
& M_G(x^2+1)  = 2^4,\\
& M_G( (x^p+1)+y(x-1))  =\prod_{x^p=-1}-|x-1|^2\prod_{x^p=1} (x+1)(x^{-1}+1)= -2^4,\\
&M_G(1+x^2+x(1+x^p)) = \prod_{x^p=-1}|x^2+1|^2\prod_{x^p=1} |x+1|^4=2^6,\\
& M_G( 1-x^{p+2} +y(x^p+1)(x+1))  =\prod_{x^p=-1}|x^2+1|^2\prod_{x^p=1} -(x+1)^2(x^{-1}+1)^2 = -2^6.
\end{align*}
To add an additional $\pm 2^l$  to the $2^6$ we choose $m=1$ or 3 so that $mp\pm 2^l\equiv 1$ mod 4,
set $mp\pm 2^l=2t+1$, where $t$ is even,  and take $F(x,y)=f(x)+yg(x)$ with
\begin{align*}
f(x) & =  \left( 1+x^2 + x(1+x^p)\right) \left( \frac{x^{t+1}-1}{x-1}\right) -m \left(\frac{x^{2p}-1}{x-1}\right), \\
g(x) & =  \left( 1+x^2 + x(1+x^p)\right) \left( \frac{x^{t}-1}{x-1}\right) -m \left(\frac{x^{2p}-1}{x-1}\right).
\end{align*}
Then $H(x)=f(x)f(x^{-1})-g(x)g(x^{-1})$ has
$$ H(1)=    (4(t+1)-2mp)^2-(4t-2mp)^2=2^4(2t+1-mp) =\pm 2^{l+4}. $$
Since
$$(x^{t+1}-1)(x^{-(t+1)}-1)-(x^t-1)(x^{-t}-1)= x^{-t} (x^{2t+1}-1)(x^{-1}-1), $$
for $x^p=-1$ we have
$$ H(x)= |x^2+1|^2x^{-t}\left( \frac{x^{2t+1}-1}{x-1}\right), \;\; \; H(-1)=2^2,\;\; \;\text{Res}(H(x),\Phi_{2p})=1. $$
 For $x^p=1$ with $x\neq 1$, 
$$ H(x)=|1+x|^4x^{-t}\left(\frac{x^{2t+1}-1}{x-1}\right),\;\;\; \text{Res}(H(x),\Phi_{p})=1.$$ 
Hence $M_G(F)=\pm 2^{l+6}$. Products then achieve all the stated forms.

\end{proof}

\begin{proof}[Proof of Theorem \ref{D_8}] Suppose that $G=D_{2^k}$ with $k\geq 2$.  From   Theorem \ref{coprime2n}  we know that the odd values taken are exactly the integers 1 mod 4.

Corresponding to  $k=2$ we have $G=\mathbb Z_2 \times \mathbb Z_2$. From Lemma \ref{div} the even measures satisfy $2^4 ||M_G(F)$ or $2^6\mid M_G(F)$. Writing $F(x,y)=f(x)+yg(x)$ for we have $M_G(F)=(f(1)^2-g(1)^2)(f(-1)^2-g(-1)^2)$  and the measures are readily achieved with
\begin{align*}
&M_G( 1+m(x+1)+ym(x+1))  =1+4m,\\
& M_G( 2+ m(x+1) + ym(x+1))  = 2^4(2m+1),\\
&M_G( 3+ (m-1)(x+1) + y( 1+(m-1)(x+1)))  = 2^6m.
\end{align*}

For $G=D_{2^k}$ with $k\geq 3$ the upper bound on the even values  $\mathscr{E}\subseteq 2^{2k+2}\mathbb Z$ follows from Lemma \ref{div}. For  the lower 
bound $2^{3k}\mathbb Z \subseteq \mathscr{E}$  take
$$ f(x)=2 + m\left(\frac{x^{2^{k-1}}-1}{x-1}\right) ,\;\;\; g(x)=(x+1) - m\left(\frac{x^{2^{k-1}}-1}{x-1}\right).  $$
Writing $H(x)=f(x)f(x^{-1})-g(x)g(x^{-1})$, we have $H(1)=2^{k+2}m$,   with $H(x)=(1-x)(1-x^{-1})$ when  $x^{2^{k-1}}=1$, $x\neq 1$,  and
$$ M_G\left( f(x)+yg(x) \right)= M_{\mathbb Z_{2^{k-1}}}(H(x))=2^{3k}m. $$
It is not hard to see that $2^{2k+2}||M_G(2+(1-x))$.

For $G=D_8$ this gives  $2^9\: \mathbb Z \subseteq \mathscr{E} \subseteq 2^8\: \mathbb Z.$ 
For the missing   multiples of $2^8$:
\begin{align*} 
& M_G\left( 2+ k\frac{x^4-1}{x-1} + yk\frac{x^4-1}{x-1}\right)=2^8(4k+1),\\
 & M_G\left( (x^2+1)(x-1)+ k\frac{x^4-1}{x-1} + y\left( (x+1)+k\frac{x^4-1}{x-1}\right)\right)=-2^8(4k+1).
\end{align*}

For $G=D_{16}$ we have  $2^{12}\: \mathbb Z \subseteq \mathscr{E} \subseteq 2^{10}\: \mathbb Z.$
The remaining multiples of $2^{10}$ are readily  obtainable using:
\begin{align*}
& M_G\left(       (1+x^2)(1+x^4) -(1-x) \right)=2^{10},  \\
& M_G\left( (1+x^2)(1+x^4) +(x-1)(x^2+1)+y(x-1)\right)=-2^{10}, \\
& M_G\left( (1+x^2) - \left(\frac{x^8-1}{x-1}\right) + y(x+1)\right) = 2^{11},\\
& M_G\left( (1+x^2) +y\left( (x+1)-\left(\frac{x^8-1}{x-1}\right) \right) \right) = -2^{11}.
\end{align*}

\end{proof}

\begin{proof}[Proof of Theorem \ref{D_18}] From the proof of Theorem \ref{D_2p} we know that the measures
are of the form $2^a p^b m$,  $(m,2p)=1$ with $a=0$ or $a\geq 2$ and $b=0$ or $b\geq 2k+1$ 
where we can achieve anything of this type with $b=0$ or $b\geq 3k$.
For $p\nmid AB$ it is readily seen that $p^{2k+1} || M_G(pA+B(x-1))$.

For $k=2$ this just leaves the measures with $b=5$.  For $p=3,5$ or $7$ we have
$$ M_{D_{2p^2}}\left( \frac{x^{(p+1)/2}-1}{x-1} + y\left( \frac{x^{(p+1)/2}-1}{x-1} -x^{p-1}\right) \right)=p^5. $$

\end{proof}


\begin{thebibliography}{99}

\bibitem{Apostol}
T. M. Apostol, \textit{Resultants of cyclotomic polynomials},
Proc. Amer. Math. Soc. \textbf{24} (1970), 457-462. 

\bibitem{Stian}
S. Clem and C. Pinner, \textit{The Lind Lehmer constant for 3-groups}, preprint.
(https://www.math.ksu.edu/$\sim$pinner/Pubs/3groupsg.pdf)

\bibitem{Conrad}
K.\ Conrad, \textit{The origin of representation theory},  Enseign. Math. (2) \textbf{44} (1998), no. 3-4, 361-392.

\bibitem{Lalin}
O. Dasbach and M. Lal\'{i}n,  \textit{Mahler measure  under variations of the base group,} Forum Math. \textbf{21} (2009), 621-637.
 

\bibitem{dilum}
D.\ De Silva and  C. Pinner,  \textit{The Lind-Lehmer constant for $\mathbb Z_p^n$}, Proc. Amer. Math. Soc.  \textbf{142} (2014), no.~6, 1935--1941.

\bibitem{pgroups}
D.\ De Silva, M. Mossinghoff, V. Pigno  and  C. Pinner,  \textit{The Lind-Lehmer constant for certain $p$-groups}, to appear Math. Comp.   (https://www.math.ksu.edu/$\sim$pinner/Pubs/pgroups.pdf)


\bibitem{Formanek}
E.\ Formanek and D.\ Sibley, \textit{The group determinant determines the group}, Proc. Amer. Math. Soc. \textbf{112} (1991), 649-656.


\bibitem{Norbert}
N.\ Kaiblinger,  \textit{On the Lehmer constant of finite cyclic groups}, Acta Arith. \textbf{142} (2010), no.~1, 79-84.


\bibitem{Norbert2}
N.\ Kaiblinger, \textit{Progress on Olga Taussky-Todd's circulant problem}, Ramanujan J. \textbf{28} (2012), no. 1, 45-60.

\bibitem{Laquer}
H.\ Laquer, \textit{Values of circulants with integer entries},  in  A Collection of Manuscripts Related to the Fibonacci Sequence, pp. 212-217. Fibonacci Assoc., Santa Clara (1980) 

\bibitem{Lang}
S. Lang, Cyclotomic Fields I and II, Graduate Texts in Mathematics 121,  Springer-Verlag 1990.

\bibitem{Lehmer}
D.~H.\ Lehmer, \textit{Factorization of certain cyclotomic functions}, Ann. Math. (2) \textbf{34} (1933), no.~3, 461--479.

\bibitem{ELehmer}
E. T. Lehmer, \textit{A numerical function applied to cyclotomy}, Bull. Amer. Math. Soc. \textbf{36} (1930), 291-298.


\bibitem{Lind}
D.\ Lind, \textit{Lehmer's problem for compact abelian groups}, Proc. Amer. Math. Soc. \textbf{133} (2005), no.~5, 1411--1416.


\bibitem{Mahoney}
M.K. Mahoney and M. Newman, \textit{Determinants of abelian group matrices}, Linear \&  Multilinear Algebra \textbf{9} (1980),121-132.


\bibitem{bunch}
M.\ Mossinghoff, V.\ Pigno, and C.\  Pinner, \textit{The Lind-Lehmer constant for $\mathbb Z_2^r \times \mathbb Z_4^s$}, preprint.
(https://www.math.ksu.edu/$\sim$pinner/Pubs/2groupSubm.pdf)


\bibitem{Newman1}
M.\ Newman, \textit{ On a problem suggested by Olga Taussky-Todd}, Ill. J. Math. \textbf{24} (1980), 156-158.


\bibitem{Newman2}
M.\ Newman, \textit{Determinants of circulants of prime power order}, Linear Multilinear Algebra \textbf{9} (1980), 187-191.  

\bibitem{Pigno1}
V.\ Pigno and  C.\ Pinner, \textit{The Lind-Lehmer constant for cyclic groups of order less than $892,371,480$}, Ramanujan J. \textbf{33} (2014), no.~2, 295--300.

\bibitem{Cid2}
C.\ Pinner and  W.\ Vipismakul, \textit{The Lind-Lehmer constant for  $\mathbb Z_{m} \times \mathbb Z^{n}_{p}$}, Integers \textbf{16} (2016), \#A46, 12pp.

\bibitem{Serre}
J.-P. Serre, Linear Representations of Finite Groups, Graduate Texts in Mathematics 42, Springer-Verlag 1977.



\bibitem{Cid1} W.\ Vipismakul,  \textit{The stabilizer of the group determinant and bounds for Lehmer's conjecture on finite abelian groups}, Ph. D. Thesis, University of Texas at Austin, 2013.

\end{thebibliography}
\end{document}